\numberwithin{equation}{section}
\theoremstyle{plain}
\newtheorem{thm}{Theorem}
\newtheorem{proposition}{Proposition}
\newtheorem{corollary}{Corollary}
\newtheorem{lemma}{Lemma}
\theoremstyle{definition}
\newtheorem{definition}{Definition}
\theoremstyle{remark}
\newtheorem{remark}{Remark}
\newcommand{\II}{\mathcal{I}}
\newcommand{\rr}{\mathbf{r}}
\newcommand{\nb}{m}
\newcommand{\HH}{\mathcal{H}}
\renewcommand{\P}{\mathds{P}}
\begin{document}

\title[Perturbation bounds for eigenspaces]{Perturbation bounds for eigenspaces under a relative gap condition}
\author{Moritz Jirak}
\author{Martin Wahl}
\address{Moritz Jirak, Institut f\"{u}r Mathematische Stochastik, Technische Universit\"{a}t Braunschweig, Universit\"{a}tsplatz 2, 38106 Braunschweig, Germany.}
\email{m.jirak@tu-braunschweig.de}
\address{Martin Wahl, Institut f\"{u}r Mathematik, Humboldt-Universit\"{a}t zu Berlin, Unter den Linden 6, 10099 Berlin, Germany.}
\email{martin.wahl@math.hu-berlin.de}
%\date{This version: \today}
\keywords{Perturbation theory, $\sin \Theta$ theorems, Relative bounds, Covariance operators, Random perturbations}

\subjclass[2010]{15A42, 47A55, 62H25}

%\title{Perturbation bounds for eigenspaces under a relative rank condition}
%\author{Moritz Jirak\thanks{Technische Universit\"{a}t Braunschweig, Germany. E-mail: m.jirak@tu-%braunschweig.de} \qquad
%Martin Wahl\thanks{Humboldt-Universit\"{a}t zu Berlin, Germany. E-mail: martin.wahl@math.hu-%berlin.de
%\newline
%\textit{2010 Mathematics Subject Classifcation.} 15A42, 15B52, 47A55, 60B20, 62H25\newline
%\textit{Key words and phrases.} Perturbation theory, $\sin \Theta$ theorems, Relative bounds, %Covariance operators, Random perturbations}}

\begin{abstract}
A basic problem in operator theory is to estimate how a small perturbation effects the eigenspaces of a self-adjoint compact operator. In this paper, we prove upper bounds for the subspace distance, taylored for structured random perturbations. As a main example, we consider the empirical covariance operator, and show that a sharp bound can be achieved under a relative gap condition. The proof is based on a novel contraction phenomenon, contrasting previous spectral perturbation approaches.
\end{abstract}
% 15A42 Inequalities involving eigenvalues and eigenvectors
% 15B52 Random matrices
% 60F05 Central limit and other weak theorems
% 60F10 Large deviations
% 60B20 Random matrices (probabilistic aspects; for algebraic aspects see 15B52)
% 62H25 Factor analysis and principal components; correspondence analysis
% 47A55 Perturbation theory [See also 47H14, 58J37, 70H09, 81Q15]
%\tableofcontents

\maketitle

\section{Introduction}
Let $\Sigma$ be a positive self-adjoint compact operator on a separable Hilbert space $\mathcal{H}$. By the spectral theorem, there exists a sequence $\lambda_1\geq \lambda_2\geq\dots> 0$ of positive eigenvalues (which is either finite or converges to zero), together with an orthonormal system of eigenvectors $u_1,u_2,\dots$ such that $\Sigma =\sum_{i\ge 1}\lambda_i u_i\otimes u_i$, where $ u_i\otimes u_i$ denotes the orthogonal projection onto the span of $u_i$.

Let $\hat \Sigma$ be another positive self-adjoint compact operator on $\mathcal{H}$. We consider $\hat\Sigma$ as a perturbed version of $\Sigma$ and write $E=\hat\Sigma-\Sigma$ for the additive perturbation, which will be thought of as small. Again, by the spectral theorem, there exists a sequence $\hat{\lambda}_1\geq \hat{\lambda}_2\ge\dots> 0$ of positive eigenvalues, together with an orthonormal system of eigenvectors $\hat{u}_1,\hat{u}_2,\dots$ such that $\hat{\Sigma}=\sum_{i\ge 1}\hat{\lambda}_i\hat u_i\otimes\hat u_i$, where $ \hat u_i\otimes\hat u_i $ denotes the orthogonal projection onto the span of $\hat u_i$.

Given a finite subset $\mathcal{I}\subseteq \mathbb{N}$, a basic problem is to bound the distance between the eigenspaces $U_{\II}=\operatorname{span}(u_i:i\in\II)$ and $\hat U_{\II}=\operatorname{span}(\hat u_i:i\in\II)$.  Letting $P_{\II}=\sum_{i\in\II}u_i\otimes u_i$ and $\hat P_{\II}=\sum_{i\in\II}\hat u_i\otimes \hat u_i$ be the orthogonal projections onto $U_{\II}$ and $\hat U_{\II}$, respectively, a natural distance is given by the Hilbert-Schmidt distance $\|\hat P_{\II}-P_{\II}\|_2$, which is equal to $\sqrt{2}$ times the Euclidean norm of the sines of the canonical angles between the corresponding subspaces, see e.g. \cite[Chapter VII.1]{MR1477662}).

A first answer to this problem is given by the Davis-Kahan $\sin \Theta$ theorem, a version of which commonly used in probability and statistics reads as
\begin{align}\label{dk}
\|\hat P_{\II}-P_{\II}\|_2\leq 2 \sqrt{2} \|E\|_2/g_\II, \quad \text{with $g_\II=\min_{i\in\II, j\notin \II}|\lambda_i-\lambda_j|$},
\end{align}
see e.g. \cite{MR0264450,MR1477662,wang_samworth_biometrika_2015}, where \eqref{dk} is proven in \cite{wang_samworth_biometrika_2015} for the case that $\II$ is an interval. Quantity $\|E\|_2$ is often replaced with $\sqrt{|\mathcal{I}|}$ times the operator norm $\|E\|_{\infty}$.

More recently, there has been increasing interest in the case where $\hat{\Sigma}$ arises from $\Sigma$ by random perturbations. In this regard, one of the most prominent examples is the empirical covariance operator, a central object in high-dimensional probability due to its importance in statistics and machine learning. The stochastic nature of this problem leaves room for significant improvements of \eqref{dk}, see e.g. \cite{anderson1963,dauxois_1982,gobet2004,HH09,mas_complex_2014,KL16b,OROURKE201826}. Combined with tools from probability theory, a powerful machinery to derive more precise perturbation results is given by the holomorphic functional calculus for linear operators, see e.g. \cite{K95,C83,HE15}. For instance, assuming that
\begin{equation}\label{Eqholfunccon}
\delta_\II=2\|E\|_\infty/g_\II<1,
\end{equation}
we have the linear expansion
\begin{equation}\label{EqRev3}
\hat P_\II-P_\II=\sum_{i\in\II}\sum_{j\notin \II}\frac{1}{\lambda_i-\lambda_j}(P_iEP_j+P_jEP_i)+S_\II(E)
\end{equation}
with remainder term satisfying $\|S_\II(E)\|_\infty\le |\II|\delta_\II^2/(1-\delta_\II)$, cf. \cite{HE15} or \cite{KL16b}. The first term on the right-hand side of \eqref{EqRev3} represents a first-order approximation for $\hat P_\II-P_\II$. While its Hilbert-Schmidt norm is usually of smaller magnitude than the upper bound in \eqref{dk}, the main drawback of this approach is the requirement \eqref{Eqholfunccon}.

Consider e.g. the case where $\Sigma$ and $\hat\Sigma$ are the population and the empirical covariance operator, respectively (cf. Section \ref{SecAppli} below). Then Lemma \ref{prop:fuknagaev:proj} below shows that under mild assumptions, the squared Hilbert-Schmidt norm of the first-order approximation satisfies
\begin{align}\label{exp:LinTerm}
\sum_{i\in\II}\sum_{j\notin \II}\frac{2\|P_iEP_j\|_2^2}{(\lambda_i-\lambda_j)^2} \leq  C\frac{\log (n)}{n}\sum_{i\in\II}\sum_{j\notin \II}\frac{\lambda_i\lambda_j}{(\lambda_i-\lambda_j)^2}
\end{align}
with high probability, where $C>0$ is a constant. A key feature of reproducing kernel Hilbert spaces and functional data approaches in machine learning and statistics are eigenvalues with an exponential or polynomial decay. For instance, for exponentially decaying eigenvalues and the choice $\II=\{1,\dots,k\}$, $k\geq 1$, the right-hand side of \eqref{exp:LinTerm} is of order $\log (n)/n$, while $\delta_\II$ explodes exponentially in $k$, meaning that \eqref{Eqholfunccon} is quickly violated and the above approach breaks down. It is thus natural to ask whether the first-order approximation in \eqref{EqRev3} still gives accurate bounds (with high probability), if \eqref{Eqholfunccon} is no longer satisfied.

The aim of this paper is to provide an affirmative answer to this question, with a view towards empirical covariance operators. Our main finding is that sharp bounds of the type \eqref{exp:LinTerm} can be derived for $\|\hat P_{\II}-P_{\II}\|_2$, replacing \eqref{Eqholfunccon} with a relative gap condition. This is achieved by exploring a novel contraction phenomenon, bypassing arguments based on the holomorphic functional calculus.

The paper is organised as follows. In Section \ref{SecRelBounds} we derive perturbation bounds in the case that certain relative coefficients (resp. sub-matrices) are bounded. These bounds are deduced from a more general statement, given in Section \ref{SecGenBounds}. Section~\ref{SecECO} presents our main applications to the empirical covariance operator. Besides, our approach can deal with a variety of other structured random perturbations. To illustrate this further, we also discuss random perturbations of low rank matrices in Section \ref{SecOperatornormEx}. Finally, the proof of our main result is given in Section \ref{SecProof}.

\subsection{Further notation}\label{SecNotation}
Let $\langle\cdot,\cdot\rangle$ and $\|\cdot\|$ denote the inner product and the norm on $\mathcal{H}$, respectively. Let $p=\dim \HH$ be the dimension of $\HH$. Abusing notation, an index $i\in\mathbb{N}$ or a set $\II \subseteq \mathbb{N}$ of indices is to be understood as a subset of $\{1,\dots,p\}$ if $p$ is finite. The set $\II^c$ denotes the complement of $\II$ (with respect to $\{1,\dots,p\}$ if $p$ is finite). For $i\geq 1$, we write $P_i= u_i\otimes u_i$ and $\hat P_i=\hat u_i\otimes \hat u_i$. Hence for $\II \subseteq \mathbb{N}$, we have $P_{\II}=\sum_{i\in\II}P_i$ and $\hat P_{\II}=\sum_{i\in\II}\hat P_i$. If $p<\infty$, then we extend the sequence of eigenvalues of $\Sigma$ and $\hat\Sigma$ by adding zeros such that the corresponding eigenvectors form an orthonormal basis of $\mathcal{H}$. If $p=\infty$, then we assume (w.l.o.g.) that the eigenvectors $u_1,u_2,\dots$ form an orthonormal basis of $\mathcal{H}$. Thus we always have $\sum_{i\geq 1}P_i=I$. Given a bounded (resp. Hilbert-Schmidt) operator $A$ on $\HH$, we write $\|A\|_\infty$ (resp. $\|A\|_2$) for the operator norm (resp. the  Hilbert-Schmidt norm). Given a trace class operator $A$ on $\HH$, we denote the trace of $A$ by $\operatorname{tr}(A)$.

\section{Main results}\label{SecMainRes}

\subsection{Sharp relative perturbation bounds}\label{SecRelBounds}
We assume throughout Section \ref{SecRelBounds} that the eigenvalues $(\lambda_i)$ are strictly positive and summable,  meaning that $\Sigma$ is a strictly positive, self-adjoint trace class operator. We begin with introducing the crucial relative eigenvalue separation measure.
\begin{definition}\label{DefRelRank} For a subset $\mathcal{I}\subseteq \mathbb{N}$, we define
\begin{equation*}
\rr_{\II}(\Sigma)=\sum_{i\in\II}\frac{\lambda_i}{\min_{j\notin \II}|\lambda_i-\lambda_j|}+\sum_{j\notin\II}\frac{\lambda_j}{\min_{i\in \II}|\lambda_j-\lambda_i|}.
\end{equation*}
\end{definition}
The quantity $\rr_{\II}(\Sigma)$ measures in a weighted way how well the eigenvalues in $(\lambda_i)_{i\in\II}$ are separated from the rest of the spectrum. Let us consider two examples. First, for $k\geq 1$, we have
\begin{equation}\label{EqCCondSE}
\rr_{\{i:\lambda_i=\lambda_k\}}(\Sigma)=\frac{m_k\lambda_k}{g_k}+\sum_{j:\lambda_j\neq \lambda_k}\frac{\lambda_j}{|\lambda_j-\lambda_k|}
\end{equation}
with multiplicity $m_k=|\{i:\lambda_i=\lambda_k\}|$ and gap $g_k=\min_{i:\lambda_i\neq \lambda_k}|\lambda_i-\lambda_k|$. Second, for $k\geq 1$, we have
\begin{equation}\label{EqAssReconstr}
\rr_{\{1,\dots,k\}}(\Sigma)=\sum_{i\leq k}\frac{\lambda_i}{\lambda_i-\lambda_{k+1}}+\sum_{j>k}\frac{\lambda_j}{\lambda_k-\lambda_j}.
\end{equation}
The eigenvalue expressions in \eqref{EqCCondSE} and \eqref{EqAssReconstr} can be easily evaluated if the $\lambda_j$ have exponential or polynomial decay, or more generally, under some convexity conditions (cf. Section \ref{SecAppli}). We now state our first main result.
\begin{thm}\label{ThmSinTheta}
Let $\mathcal{I}\subseteq \mathbb{N}$ be finite. Suppose that there is a real number $x>0$ such that for all $i,j\geq 1$,
\begin{equation}\label{EqCoeffRelBound}
\|P_iEP_j\|_2\leq x\sqrt{\lambda_i\lambda_j}.
\end{equation}
If
\begin{equation}\label{EqCCondSD1}
\rr_{\II}(\Sigma)\leq 1/(8x),
\end{equation}
then we have
\begin{equation}\label{EqLocalSinTheta}
\|\hat P_{\mathcal{I}}- P_{\mathcal{I}}\|_2^2\leq 16x^2\sum_{i\in\mathcal{I}}\sum_{j\notin\mathcal{I}}\frac{\lambda_i\lambda_j}{(\lambda_i-\lambda_j)^2}.
\end{equation}
\end{thm}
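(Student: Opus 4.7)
Write $D := \hat{P}_\II - P_\II$. The starting point is that $\hat P_\II$ commutes with $\hat\Sigma = \Sigma + E$, which, combined with $[P_\II, \Sigma] = 0$ and $\hat P_\II = P_\II + D$, yields
\[
[D, \Sigma] \;=\; [E, \hat P_\II] \;=\; [E, P_\II] + [E, D].
\]
Applying the block map $P_i(\cdot)P_j$ with $i \in \II^c$, $j \in \II$, and using $P_i[E, P_\II]P_j = P_iEP_j$ together with $P_i[D,\Sigma]P_j = (\lambda_j - \lambda_i)P_iDP_j$, produces the Sylvester-type relation
\[
(\lambda_j - \lambda_i)\, P_i D P_j \;=\; P_i E P_j + P_i [E, D] P_j, \qquad i \in \II^c,\; j \in \II, \qquad (\star)
\]
A direct trace computation gives $\|D\|_2^2 = 2\|P_{\II^c}\hat P_\II\|_2^2$, and the factorisation $P_{\II^c}\hat P_\II P_{\II^c} = (P_{\II^c}\hat P_\II)(P_{\II^c}\hat P_\II)^*$ combined with $\|AB\|_2 \le \|A\|_\infty\|B\|_2$ gives $\|P_{\II^c}\hat P_\II P_{\II^c}\|_2 \le \|P_{\II^c}\hat P_\II\|_2^2$. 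Hence, in the (a posteriori justifiable) regime $\|D\|_2 \le 1$, the task reduces to bounding the off-diagonal piece $X := P_{\II^c}\hat P_\II P_\II = \sum_{i\in\II^c,\,j\in\II} P_iDP_j$, namely $\|D\|_2^2 \le 4\|X\|_2^2$.

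Squaring $(\star)$, summing over $i\in\II^c,\, j\in\II$, and invoking \eqref{EqCoeffRelBound} yields
\[
\|X\|_2^2 \;\le\; 2x^2 S + 2R, \qquad S := \sum_{i\in\II^c,\,j\in\II}\frac{\lambda_i\lambda_j}{(\lambda_i - \lambda_j)^2}, \qquad R := \sum_{i\in\II^c,\,j\in\II}\frac{\|P_i[E,D]P_j\|_2^2}{(\lambda_i - \lambda_j)^2}.
\]
The first sum is precisely the right-hand side of \eqref{EqLocalSinTheta} (up to the constant). The entire game is to prove the contraction $R \le \tfrac{1}{2}\|X\|_2^2$: absorbing $R$ into the left-hand side then delivers $\|X\|_2^2 \le 4x^2 S$ and hence $\|D\|_2^2 \le 16 x^2 S$, i.e.\ the claimed \eqref{EqLocalSinTheta}.

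The principal technical obstacle is this contraction on $R$. Expanding $P_i[E,D]P_j = \sum_k\big((P_iEP_k)(P_kDP_j) - (P_iDP_k)(P_kEP_j)\big)$ and applying \eqref{EqCoeffRelBound} in the form $\|P_iEP_k\|_2 \le x\sqrt{\lambda_i\lambda_k}$ leads to sums of the shape $\sum_k\sqrt{\lambda_k}\|P_kDP_j\|_2$. The crux is a weighted Cauchy-Schwarz,
\[
\Big(\sum_k \sqrt{\lambda_k}\|P_kDP_j\|_2\Big)^{\!2} \;\le\; \rr_\II(\Sigma)\sum_k g_k\|P_kDP_j\|_2^2,
\]
where $g_k$ is the relative gap of $\lambda_k$ from the opposite side of $\II$, so that $\sum_k \lambda_k/g_k \le \rr_\II(\Sigma)$ by Definition~\ref{DefRelRank}. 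Combined with the elementary estimate $(\lambda_i-\lambda_j)^{-2} \le g_i^{-1}g_j^{-1}$, the outer double sum over $(i,j)\in\II^c\times\II$ reassembles into $x^2\rr_\II(\Sigma)^2\|X\|_2^2$, plus strictly lower-order contributions from the diagonal block pieces $P_\II D P_\II$ and $P_{\II^c}DP_{\II^c}$ (each quadratic in $\|X\|_2$ by the first paragraph). The hypothesis $\rr_\II(\Sigma) \le 1/(8x)$ is then precisely calibrated so that the resulting contraction factor is at most $\tfrac{1}{2}$, closing the argument and producing the constant $16$ in \eqref{EqLocalSinTheta}.
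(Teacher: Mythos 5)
Your overall strategy --- write the off-diagonal block equation $(\lambda_j-\lambda_i)P_iDP_j=P_iEP_j+P_i[E,D]P_j$, recognise the first term as the first-order approximation, and absorb the quadratic remainder by a contraction under the relative gap condition --- is the right philosophy, and the reductions $\|D\|_2^2=2\|P_{\II^c}\hat P_\II\|_2^2$ and (modulo the bootstrap) $\|D\|_2^2\le 4\|X\|_2^2$ are correct. But the step on which everything hinges, the contraction on $R$, is not established, and the route you sketch does not deliver it. After your weighted Cauchy--Schwarz
\[
\Big(\sum_k\sqrt{\lambda_k}\,\|P_kDP_j\|_2\Big)^2\le \rr_{\II}(\Sigma)\sum_k g_k\|P_kDP_j\|_2^2
\]
and the bound $(\lambda_i-\lambda_j)^{-2}\le g_i^{-1}g_j^{-1}$, the double sum over $(i,j)\in\II^c\times\II$ collapses to expressions of the form $x^2\rr_{\II}^2(\Sigma)\sum_{k,j}(g_k/g_j)\|P_kDP_j\|_2^2$, \emph{not} to $x^2\rr_{\II}^2(\Sigma)\|X\|_2^2$. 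The residual ratios $g_k/g_j$ are unbounded in the regime of interest: for $\II=\{1,\dots,\kappa\}$ with $\lambda_\kappa-\lambda_{\kappa+1}\ll\lambda_\kappa$ one has $g_j=\lambda_\kappa-\lambda_{\kappa+1}$ for $j=\kappa$ while $g_k\approx\lambda_\kappa$ for large $k\in\II^c$, so the weight on that block is of order $\rr_\II(\Sigma)$ or worse. In other words, the contraction closes only in a suitably \emph{weighted} norm of the blocks of $D$, not in the plain Hilbert--Schmidt norm of $X$. This is exactly the difficulty the paper's proof is organised around: there the self-improving inequality is set up, for each fixed $j\notin\II$ separately, for the family $A_r=\|\sum_{i\in\II}(\hat\lambda_i-\lambda_j)^{-1}P_{\II_r}E\hat P_i\|_2$, and is closed by summing against the weights $\sqrt{b_r}/g_r$ and $\sqrt{a_r}/g_r$ (Lemma \ref{KeyLemmaBlock}), with the assembly over $j$ postponed to the very end. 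Your single global contraction skips this weighting and, as written, fails.

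Two further issues. First, even granting $R\le\tfrac12\|X\|_2^2$, inserting it into $\|X\|_2^2\le 2x^2S+2R$ yields $\|X\|_2^2\le 2x^2S+\|X\|_2^2$, which is vacuous; you need $R\le\tfrac14\|X\|_2^2$ (or a sharper splitting than $(a+b)^2\le 2a^2+2b^2$) to conclude $\|X\|_2^2\le 4x^2S$, so the constants are not in fact ``precisely calibrated.'' Second, the ``a posteriori justifiable'' restriction to $\|D\|_2\le 1$ is a genuine bootstrap: it requires a continuity argument along $t\mapsto\Sigma+tE$ together with a guarantee that the eigenvalues indexed by $\II$ remain separated along the path so that $t\mapsto\hat P_\II$ is continuous. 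The paper supplies the analogous ingredient explicitly (Lemma \ref{LemEVConcBlock}, via Proposition \ref{PropEvRC}) and never needs the bootstrap. Neither of these two points is fatal on its own, but combined with the unproved contraction the argument is incomplete at its core.
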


\begin{remark}
The numerical constants in \eqref{EqCCondSD1} and \eqref{EqLocalSinTheta} are selected for convenience.
\end{remark}

\begin{remark}
Motivated by the empirical covariance operator, Theorem \ref{ThmSinTheta} considers a perturbation problem where the perturbation $E$ is related to $\Sigma$. There is, however, also a connection to numerical analysis. If $p$ is finite, then the real number $x$ can be chosen as the least upper bound of the absolute values of the $\langle u_i, Eu_j\rangle/\sqrt{\lambda_i\lambda_j}$, $i,j\in\{1,\dots,p\}$. These quantities are the coefficients of the so called relative perturbation $\Sigma^{-1/2}E\Sigma^{-1/2}$ with respect to the eigenvectors of $\Sigma$. The latter matrix plays a prominent role in relative perturbation theory, see e.g. \cite{I98,I00}. The novel ingredient of Theorem \ref{ThmSinTheta} is Condition \eqref{EqCCondSD1}, ensuring that sharp bounds can derived. Indeed, \eqref{EqLocalSinTheta} gives the size of the squared Hilbert-Schmidt norm of the first-order approximation in \eqref{EqRev3}, provided that the bounds in \eqref{EqCoeffRelBound} are sufficiently tight.
\end{remark}

\begin{remark}\label{rem:first:order}
An inspection of the proof shows that the inequality
\begin{equation}\label{EqLocalSinTheta1}
\|\hat P_{\mathcal{I}}- P_{\mathcal{I}}\|_2^2\leq 8 \sum_{i\in\mathcal{I}}\sum_{j\notin\mathcal{I}}\frac{\|P_iEP_j\|_2^2}{(\lambda_i-\lambda_j)^2}+512x^2\rr^2_{\II}(\Sigma)\sum_{i\in\mathcal{I}}\sum_{j\notin\mathcal{I}}\frac{\lambda_i\lambda_j}{(\lambda_i-\lambda_j)^2}
\end{equation}
holds, from which \eqref{EqLocalSinTheta} follows by inserting \eqref{EqCoeffRelBound} and \eqref{EqCCondSD1}.
\end{remark}

Next, we state the following generalization of Theorem \ref{ThmSinTheta}, more suitable
for infinite-dimensional Hilbert spaces:
\begin{thm}\label{ThmSinThetaExt} Let $\II\subseteq \mathbb{N}$ be finite. Write $\II=\dot\cup_{r\leq m}\II_r$ such that for all $r\leq m$ and all $i,j\in\II_r$ we have $\lambda_i= \lambda_j$. Let $\II'\subseteq\mathbb{N}$ be another finite subset such that $|\lambda_i-\lambda_j|\geq \lambda_i/2$ for all $i\in\II$ and all $j\notin\II'$. Write $\II'\setminus \II=\dot\cup_{m<r\le m+n}\II_r$ such that for all $m<r\leq m+n$ and all $i,j\in\II_r$ we have $\lambda_i= \lambda_j$. Let $\II_{m+n+1}=\II'^c$. Suppose that there is a real number $x>0$ such that for all $r,s\leq m+n+1$,
\begin{equation}\label{EqRelCoeffExt}
\|P_{\II_r}EP_{\II_s}\|_2\leq x\sqrt{\sum_{i\in\II_r}\lambda_i}\sqrt{\sum_{j\in \II_s}\lambda_j}.
\end{equation}
If $\rr_{\II}(\Sigma)\leq 1/(8x)$, then \eqref{EqLocalSinTheta} holds with the constant $16$ replaced by $64$.
\end{thm}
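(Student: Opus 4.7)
The strategy is to lift the argument of Theorem \ref{ThmSinTheta} to the block setting, replacing the rank-one projections $P_i$ with the block projections $P_{\II_r}$ associated to the partition, and treating the tail block $\II_{m+n+1}=\II'^c$ separately via the relative gap $|\lambda_i-\lambda_j|\geq \lambda_i/2$. The starting observation is that the sums appearing in the hypotheses and conclusion of Theorem \ref{ThmSinTheta} are invariant under rearrangement of the orthonormal basis within any eigenspace of $\Sigma$, so blocks of coinciding eigenvalues can be treated as single multi-dimensional components throughout.

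For $r\leq m+n$ write $\lambda^{(r)}$ for the common eigenvalue of the block $\II_r$. For $r\leq m$ and $m<s\leq m+n$,
\begin{equation*}
\sum_{i\in\II_r}\sum_{j\in\II_s}\frac{\|P_iEP_j\|_2^2}{(\lambda_i-\lambda_j)^2}=\frac{\|P_{\II_r}EP_{\II_s}\|_2^2}{(\lambda^{(r)}-\lambda^{(s)})^2},
\end{equation*}
so the block bound \eqref{EqRelCoeffExt} plays exactly the role of \eqref{EqCoeffRelBound}, estimating the first-order term in \eqref{EqLocalSinTheta1} by $x^{2}\sum_{i\in\II_r,\,j\in\II_s}\lambda_i\lambda_j/(\lambda_i-\lambda_j)^2$ just as in Theorem \ref{ThmSinTheta}. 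Since the quantity $\rr_\II(\Sigma)$ is likewise block-invariant, the smallness hypothesis $\rr_\II(\Sigma)\leq 1/(8x)$ continues to close the contraction argument underlying Theorem \ref{ThmSinTheta}.

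For the tail, the relative gap gives $(\lambda_i-\lambda_j)^{-2}\leq 4\lambda_i^{-2}$ for $i\in\II$ and $j\in\II'^c$, and combining this with the tail instance of \eqref{EqRelCoeffExt} controls every contribution $\|P_iEP_j\|_2^2/(\lambda_i-\lambda_j)^2$ by the block quantity $4\|P_{\II_r}EP_{\II'^c}\|_2^2/(\lambda^{(r)})^2$ at the cost of an absolute constant; the same gap then converts this back to an upper bound in terms of $\sum_{i\in\II,\,j\in\II'^c}\lambda_i\lambda_j/(\lambda_i-\lambda_j)^2$. This factor-of-four loss is what inflates the final constant from $16$ to $64$.

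The main obstacle is the bookkeeping: one must verify that every use of the pointwise bound \eqref{EqCoeffRelBound} in the proof of Theorem \ref{ThmSinTheta} can be replaced either by (a) the block identity above when the eigenvalues are constant within the relevant blocks, or (b) the tail estimate $\|P_{\II_r}EP_{\II'^c}\|_2\leq x\sqrt{\sum_{i\in\II_r}\lambda_i}\sqrt{\sum_{j\in\II'^c}\lambda_j}$ combined with the $\lambda_i/2$ gap. Since both $\rr_\II(\Sigma)$ and the target sum $\sum_{i\in\II,\,j\notin\II}\lambda_i\lambda_j/(\lambda_i-\lambda_j)^2$ are block-invariant and the tail is summable via the gap, the contraction argument of Theorem \ref{ThmSinTheta} transfers to the present setting with the enlarged constant.
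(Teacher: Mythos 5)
Your proposal is correct and follows essentially the same route as the paper: the ``block lifting'' you describe is exactly an application of Theorem \ref{ThmSinThetaAbstr} with the given partition and the choice $a_r=b_r=x\sum_{i\in\II_r}\lambda_i$, and your tail estimate via $|\lambda_i-\lambda_j|\geq\lambda_i/2$ is precisely the paper's closing computation producing the factor $4$ (hence $64=4\cdot 16$). The only difference is presentational: the paper invokes the general block theorem directly rather than re-running the proof of Theorem \ref{ThmSinTheta} at the block level.
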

Theorem \ref{ThmSinThetaExt} reveals that it actually suffices to have adequate bounds for certain sub-matrices corresponding to the same eigenvalue, and that the far away part represented by $\II'^c$ can be dealt with separately. Note that \eqref{EqRelCoeffExt} follows from \eqref{EqCoeffRelBound}, as can be seen by squaring out the Hilbert-Schmidt norm.

A prototype of a relative perturbation is given through $\hat\Sigma=\Sigma+x (v\otimes v)$ with $x>0$ and $v=\sum_{i\geq 1}\sqrt{\lambda_i}u_i$. In this case, \eqref{EqRelCoeffExt} holds with equality. In contrast, the relative bound \cite[Theorem 3.6]{I00}, for instance, contains the operator norm of the relative perturbation (equals $xp$), and thus becomes useless in higher dimensions.

\subsection{A general perturbation bound}\label{SecGenBounds}
We now present a more general perturbation bound and show how it implies Theorems 2 and 3. In order to deal with the different assumptions on certain coefficients (resp. sub-matrices) of $E$ from the last section, we introduce some flexibility with respect to the structure of the perturbation. In addition, we show that the relative gap conditions in Theorems 2 and 3 can be weakened by also incorporating the operator norm.
\begin{thm}\label{ThmSinThetaAbstr}
Let $\II\subseteq \mathbb{N}$ be a finite subset and let $\{\II_1,\dots,\II_{\nb}\}$ be a partition of $\II$ (meaning that $\II_1,\dots,\II_{\nb}$ are non-empty, disjoint subsets of $\II$ whose union is equal to $\II$). Let $\{\II_{{\nb}+1},\II_{{\nb}+2},\dots\}$ be a (possibly finite) partition of $\II^c$ into intervals.  Let $(a_r)$ and $(b_r)$ be sequences of non-negative real numbers such that for all $r,s\geq 1$,
\begin{equation}\label{EqBoundCoeffBlock}
\|P_{\II_r}EP_{\II_s}\|_\infty\leq \max(\sqrt{a_rb_s},\sqrt{b_ra_s}),\qquad\|P_{\II_r}EP_{\II_s}\|_2\leq \sqrt{b_rb_s}.
\end{equation}
Suppose that
\begin{equation}\label{EqCCondBlock}
\bigg(\sum_{r\geq 1}\frac{a_r}{g_r}\bigg)\bigg(\sum_{r\geq 1}\frac{b_r}{g_r}\bigg)\leq 1/64
\end{equation}
with $g_r=\min_{i\in\II_r,j\notin \II}|\lambda_i-\lambda_j|$ for $r\leq \nb$ and $g_r=\min_{j\in\II_r,i\in\II}|\lambda_i-\lambda_j|$ otherwise.
Then we have
\begin{align}
\|\hat P_{\II}- P_{\II}\|_2^2&\leq 12\sum_{r> \nb}\sum_{s\leq \nb}\frac{b_rb_s}{g_{r,s}^2}+256\bigg(\sum_{r\geq 1}\frac{b_r}{g_r}\bigg)^2\sum_{r> \nb}\sum_{s\leq \nb}\frac{a_rb_s}{g_{r,s}^2}\label{EqSinThetaGeneral}
\end{align}
with $g_{r,s}=\min_{i\in\II_r,j\in\II_s}(\lambda_{i}- \lambda_j)^2$.

In particular, if \eqref{EqBoundCoeffBlock} holds with $a_r=b_r$ and if $\sum_{r\ge 1}b_r/g_r\le 1/8$, then we have
\begin{align}\label{EqSinThetaGeneralCons}
\|\hat P_{\II}- P_{\II}\|_2^2&\leq 16\sum_{r> \nb}\sum_{s\leq \nb}\frac{b_rb_s}{g_{r,s}^2}.
\end{align}
\end{thm}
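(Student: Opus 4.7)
The plan is to reduce the problem to estimating the off-diagonal block $Y := P_{\II^c}\hat P_{\II} P_{\II}$. Setting $D := \hat P_\II - P_\II$, $W := P_\II\hat P_\II P_\II$, $Z := P_{\II^c}\hat P_\II P_{\II^c}$, the idempotency $\hat P_\II^2 = \hat P_\II$ yields the algebraic constraints
\begin{equation*}
W - W^2 = Y^*Y, \qquad Z - Z^2 = YY^*,
\end{equation*}
so $\|W - P_\II\|_2$ and $\|Z\|_2$ are of higher order in $\|Y\|_\infty\|Y\|_2$. Together with the standard identity $\|\hat P_\II - P_\II\|_2^2 = 2\|P_{\II^c}\hat P_\II\|_2^2 = 2(\|Y\|_2^2 + \|Z\|_2^2)$, the task reduces to a sharp estimate of $\|Y\|_2$.

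The Sylvester equation for $Y$ arises from $\hat\Sigma\hat P_\II = \hat P_\II\hat\Sigma$ combined with $[\Sigma, P_\II] = [\Sigma, P_{\II^c}] = 0$:
\begin{equation*}
[\Sigma, Y] = -P_{\II^c} E P_\II - P_{\II^c} E D P_\II + P_{\II^c} D E P_\II.
\end{equation*}
Since $[\Sigma, \cdot]$ acts in the eigenbasis of $\Sigma$ as multiplication by $\lambda_i - \lambda_j$, blockwise inversion yields
\begin{equation*}
\|P_{\II_r} Y P_{\II_s}\|_2 \le \frac{1}{g_{r,s}}\bigl(\|P_{\II_r}EP_{\II_s}\|_2 + \|P_{\II_r}EDP_{\II_s}\|_2 + \|P_{\II_r}DEP_{\II_s}\|_2\bigr)
\end{equation*}
for $r > \nb$, $s \le \nb$. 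The purely linear term, bounded by $\sqrt{b_rb_s}/g_{r,s}$, already produces the first summand of \eqref{EqSinThetaGeneral} after summing.

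For the bilinear terms I insert $I = \sum_k P_{\II_k}$ and apply $\|AB\|_2 \le \|A\|_\infty \|B\|_2$, placing the operator-norm bound of \eqref{EqBoundCoeffBlock} on the $E$-factors and keeping the $D$-factors in Hilbert--Schmidt norm. The blocks $P_{\II_k}DP_{\II_s}$ with $k > \nb$ equal $P_{\II_k}YP_{\II_s}$ (and similarly $P_{\II_r}DP_{\II_k}$ with $k \le \nb$ equals $P_{\II_r}YP_{\II_k}$); the remaining ``diagonal'' blocks involve $W - P_\II$ or $Z$ and are re-expressed via the idempotent identities as quadratic in $Y$. Choosing each $\max(\sqrt{a_rb_k}, \sqrt{b_ra_k})$ so that the $a$'s sit at the outer indices $r, s$ and the $b$'s at the summation index $k$, then applying Cauchy--Schwarz weighted by $1/g_k$, collapses the inner sum to $\sum_k b_k/g_k$ and produces a self-consistent inequality of schematic form
\begin{equation*}
\|Y\|_2 \le \sqrt{\textstyle\sum_{r,s} b_rb_s/g_{r,s}^2} + C_1\Bigl(\textstyle\sum_r b_r/g_r\Bigr)\|Y\|_2 + C_2\Bigl(\textstyle\sum_r b_r/g_r\Bigr)\sqrt{\textstyle\sum_{r,s} a_rb_s/g_{r,s}^2}.
\end{equation*}
Under \eqref{EqCCondBlock} the coefficient of $\|Y\|_2$ on the right is at most $1/2$, so absorbing it and squaring gives \eqref{EqSinThetaGeneral}; the consequence \eqref{EqSinThetaGeneralCons} follows immediately upon setting $a_r = b_r$.

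The main obstacle is the bookkeeping in the bilinear step. The asymmetric weight $a_rb_s$ in \eqref{EqSinThetaGeneral} must emerge from orienting the two options inside each $\max(\sqrt{ab}, \sqrt{ba})$ compatibly across $ED$ and $DE$, and the Cauchy--Schwarz weighting must be chosen so that the outer asymmetric sum $\sqrt{\sum a_rb_s/g_{r,s}^2}$ survives intact while only the contraction factor $\sum_r b_r/g_r$ condenses out of the inner sum, rather than a looser product $\sqrt{(\sum a_r/g_r)(\sum b_r/g_r)}$. The algebraic identities $W - W^2 = Y^*Y$ and $Z - Z^2 = YY^*$ are precisely what keep the diagonal corrections strictly quadratic in $Y$, so that the entire argument closes as a single scalar fixed-point inequality for $\|Y\|_2$ without ever invoking the Neumann-series condition \eqref{Eqholfunccon}.
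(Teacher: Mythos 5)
Your reduction to the off-diagonal block $Y=P_{\II^c}\hat P_\II P_\II$ via a Sylvester/Riccati equation is a genuinely different route from the paper, which never forms $Y$: it works with the exact eigen-equations $(\hat\lambda_i-\lambda_j)P_j\hat P_i=P_jE\hat P_i$, first proving $|\hat\lambda_i-\lambda_j|\ge|\lambda_i-\lambda_j|/2$ (Lemma \ref{LemEVConcBlock}) and then solving a recursive inequality for $A_r=\|\sum_{i\in\II}(\hat\lambda_i-\lambda_j)^{-1}P_{\II_r}E\hat P_i\|_2$ (Lemma \ref{KeyLemmaBlock}). Your version, however, has a genuine gap where you dispose of the diagonal blocks. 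The identities $W-W^2=Y^*Y$ and $Z-Z^2=YY^*$ do \emph{not} by themselves make $\|P_\II-W\|_2$ and $\|Z\|_2$ ``higher order in $\|Y\|_\infty\|Y\|_2$''. Since $YY^*=Z(I-Z)$ is a polynomial in $Z$, the eigenvalues satisfy $\mu(1-\mu)=\nu$, and one gets $\|Z\|_2\le 2\|Y\|_\infty\|Y\|_2$ only after knowing $\|Z\|_\infty\le 1/2$, i.e.\ that no canonical angle between $U_\II$ and $\hat U_\II$ exceeds $\pi/4$. If some direction of $\hat U_\II$ is (nearly) orthogonal to $U_\II$, then $Z$ has an eigenvalue near $1$ while the corresponding eigenvalue of $YY^*$ is near $0$, so $Z$ is not controlled by $Y$ at all; the same issue afflicts $W^{-1}$ in $P_\II-W=W^{-1}Y^*Y$. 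This a priori angle bound is exactly the nontrivial input your scheme is missing. In the paper its role is played by the separation of empirical eigenvalues, which licenses dividing by $\hat\lambda_i-\lambda_j$ and lets one bound $\|P_{\II^c}\hat P_\II\|_2^2=\|Y\|_2^2+\|Z\|_2^2$ in one stroke without ever isolating $Z$. Without such an ingredient (or a homotopy argument in $tE$, which you do not supply and which is delicate because $\hat P_\II$ need not vary continuously when eigenvalues cross), both the closing of your fixed-point inequality and the final identity $\|\hat P_\II-P_\II\|_2^2=2(\|Y\|_2^2+\|Z\|_2^2)$ leave $\|Z\|_2$ unestimated.

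A second, independent problem is the absorption step. You assert that the coefficient of $\|Y\|_2$ in your self-consistent inequality is $C_1\sum_r b_r/g_r$ and that \eqref{EqCCondBlock} forces it below $1/2$. But \eqref{EqCCondBlock} bounds only the product $(\sum_r a_r/g_r)(\sum_r b_r/g_r)$; when $a_r\ll b_r$ the sum $\sum_r b_r/g_r$ can be arbitrarily large --- indeed $(\sum_r b_r/g_r)^2$ appears as an unbounded factor in the conclusion \eqref{EqSinThetaGeneral}. To close the recursion the two options inside $\max(\sqrt{a_rb_s},\sqrt{b_ra_s})$ and the Cauchy--Schwarz weights must be arranged so that only $\sum_r\sqrt{a_rb_r}/g_r$ and the product $(\sum_r a_r/g_r)(\sum_r b_r/g_r)$ occur as contraction factors; this is precisely the two-stage bootstrap (weight by $\sqrt{b_r}/g_r$ and resolve, then weight by $\sqrt{a_r}/g_r$ and resolve again) carried out in the paper's Lemma \ref{KeyLemmaBlock}. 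As written, your single-step contraction does not go through under \eqref{EqCCondBlock}.
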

\begin{remark}
From \eqref{EqCCondBlock}, it follows that $(b_r)$ is summable. Combining this with Assumption \eqref{EqBoundCoeffBlock}, we see that $E$ has to be Hilbert-Schmidt.
\end{remark}
\begin{remark}
Theorem \ref{ThmSinThetaAbstr} includes a version of the Davis-Kahan $\sin \Theta$ theorem. Indeed, the simple choice $\II_1=\II$, $\II_2=\II^c$ and $a_r=\|E\|_\infty^2/\|E\|_2$, $b_r=\|E\|_2$, $r=1,2$, leads to $\|\hat P_{\II}- P_{\II}\|_2\le 4\|E\|_2/g_\II$, provided that $\|E\|_\infty/g_\II\le 1/16$, with $g_\II=\min_{i\in\II, j\notin \II}|\lambda_i-\lambda_j|$. One advantage of the Davis-Kahan $\sin\Theta$ theorem is that it depends only on a small number of parameters: this version, for instance, shows that the sensitivity of $\hat P_{\II}- P_{\II}$ can be described by the size of the perturbation relative to the gap $g_\II$. Our main objective is to go beyond this simple worst-case scenario using only a single gap. This corresponds to choosing finer partitions. In the extreme case where both partitions consist of singletons, the bound reflects the magnitude of the first-order approximation given in \eqref{EqRev3}, and involves gaps between all relevant eigenvalues.
\end{remark}
\begin{remark}
Assumption \eqref{EqBoundCoeffBlock} is designed to deal with random perturbations. While it might be difficult to check \eqref{EqBoundCoeffBlock} for a given fixed $\Sigma$ and $\hat \Sigma$, we show in Section~\ref{SecAppli} that it holds with high probability for a variety of structured random perturbations. In this respect, note that Assumption \eqref{EqBoundCoeffBlock} allows for some flexibility when bounding $P_{\II_r}EP_{\II_s}$. Observe that since $\|\cdot\|_{\infty} \leq \|\cdot\|_2$, the second condition implies the first if $a_r=b_r$ for all $r\geq 1$. If, however, significantly better bounds for the operator norm are available, see e.g. \cite{V12,MR3185193,vH15}, we may select $a_r \ll b_r$, yielding much weaker conditions in \eqref{EqCCondBlock}.
\end{remark}
\begin{remark}\label{RemGenHM}
If $p=\dim\mathcal{H}<\infty$, then Theorem \ref{ThmSinThetaAbstr} holds for all self-adjoint operators $\Sigma$ and $\hat\Sigma$. Indeed, we can always find a real number $y>0$ such that $\Sigma+yI$ and $\hat{\Sigma}+yI$ are positive, and the claim follows because eigenvectors, gaps, and $E$ are invariants of this transformation. If $p=\infty$, then positiveness is also not necessary, but the statement and its proof are notationally more involved.
\end{remark}

We conclude this section by showing how Theorems \ref{ThmSinTheta} and \ref{ThmSinThetaExt} can be obtained by an application of Theorem \ref{ThmSinThetaAbstr}.
\begin{proof}[Proof of Theorems \ref{ThmSinTheta} and \ref{ThmSinThetaExt}]
In order to obtain Theorem \ref{ThmSinTheta}, take partitions of $\II$ and $\II^c$ consisting  of singletons. Choose $a_j=b_j=x\lambda_j$, with $x$ from \eqref{EqCoeffRelBound}. Then \eqref{EqBoundCoeffBlock} holds because $\|P_{i}EP_{j}\|_\infty=\|P_{i}EP_{j}\|_2$ and \eqref{EqCCondBlock} coincides with \eqref{EqCCondSD1}. Thus \eqref{EqLocalSinTheta} follows from \eqref{EqSinThetaGeneralCons}. Regarding Theorem \ref{ThmSinThetaExt}, the partition is already given. In addition, for $r\leq m+n+1$, set $a_r=b_r=x\sum_{i\in\II_r}\lambda_i$. Then it is easy to see that \eqref{EqBoundCoeffBlock} and \eqref{EqCCondBlock} are implied by \eqref{EqRelCoeffExt} and \eqref{EqCCondSD1}, respectively, and the claim follows from \eqref{EqSinThetaGeneralCons}, using that by construction of $\II'$,
\[
\frac{x^2\lambda_i\sum_{j\in {\II'}^c}\lambda_j}{\min_{j\in {\II'}^c}(\lambda_i-\lambda_j)^2}\leq 4x^2\sum_{j\in {\II'}^c}\frac{\lambda_i\lambda_j}{\lambda_i^2}\leq 4x^2\sum_{j\in {\II'}^c}\frac{\lambda_i\lambda_j}{(\lambda_i-\lambda_j)^2}
\]
for all $i\in\II$.
\end{proof}

\section{Applications}\label{SecAppli}
\subsection{Empirical covariance operators}\label{SecECO}
Let us discuss applications of our main result to the empirical covariance operator.
Let $X$ be a random variable taking values in $\mathcal{H}$. We suppose that $X$ is centered and strongly square-integrable, meaning that $\mathbb{E} X =0$ and $\mathbb{E}\|X\|^2<\infty$. Let $\Sigma =\mathbb{E} X\otimes X$ be the covariance operator of $X$, which is a positive, self-adjoint trace class operator, see e.g. \cite[Theorem 7.2.5]{HE15}. For $j\geq 1$, let $\eta_j=\lambda_j^{-1/2}\langle u_j, X\rangle$ be the $j$-th Karhunen-Lo\`{e}ve coefficient of $X$. Let $X_1,\dots,X_n$ be independent copies of $X$ and let
\begin{equation*}
\hat{\Sigma}=\frac{1}{n}\sum_{l=1}^nX_l\otimes X_l
\end{equation*}
be the empirical covariance operator. Combining Theorem \ref{ThmSinThetaExt} with concentration inequalities, we get:

\begin{thm}\label{ThmCovOp} In the above setting, suppose that
\begin{equation}\label{EqMomentAss}
\sup_{j \geq 1}\mathbb{E}|\eta_j|^q \leq C_{\eta}
\end{equation}
for $q>4$ and a constant $C_{\eta} > 0$. {Then there are constants $c_1,C_1 > 0$ depending only on $C_{\eta}$  and $q$,  such that for all $k, k_0\geq 1$ with $\lambda_{k_0}\leq \lambda_k/2$ and all $t \geq 1$ satisfying}
\begin{equation}\label{EqCCond}
\frac{t}{\sqrt{n}}\bigg(\sum_{i\leq k}\frac{\lambda_i}{\lambda_i-\lambda_{k+1}}+\sum_{j>k}\frac{\lambda_j}{\lambda_k-\lambda_j}\bigg)\leq c_1,
\end{equation}
we have
\begin{align}
& \mathbb{P}\Big(\|\hat P_{\{1,\dots,k\}}-P_{\{1,\dots,k\}}\|_2^2 > \frac{C_1t^2}{n} \sum_{i\leq k}\sum_{j>k}\frac{\lambda_i\lambda_j}{(\lambda_i-\lambda_j)^2}\Big)\leq  k_0^2 \Big(\frac{n^{1-q/4}}{t^{q/2}} +   \exp(-t^2)\Big)\label{EqCorCovOpEq}.
\end{align}
\end{thm}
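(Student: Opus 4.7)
The plan is to apply Theorem \ref{ThmSinThetaExt} with $\II = \{1,\dots,k\}$ and $\II' = \{1,\dots,k_0\}$, taking $x = C_0 t/\sqrt{n}$ for a suitable constant $C_0 = C_0(C_\eta, q)$. The hypothesis $\lambda_{k_0} \leq \lambda_k/2$ ensures that for $i \in \II$ and $j \in (\II')^c$ one has $\lambda_j \leq \lambda_k/2 \leq \lambda_i/2$, hence $|\lambda_i - \lambda_j| \geq \lambda_i/2$ as required. I partition $\II$ and $\II' \setminus \II$ into maximal eigenvalue-constancy groups $\II_1,\dots,\II_{m+n}$ and set $\II_{m+n+1} = (\II')^c$. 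By the identity \eqref{EqAssReconstr}, $\rr_{\{1,\dots,k\}}(\Sigma)$ is exactly the parenthesised expression in \eqref{EqCCond}, so the relative-gap hypothesis $\rr_\II(\Sigma) \leq 1/(8x)$ of Theorem \ref{ThmSinThetaExt} coincides with \eqref{EqCCond} upon taking $c_1 = 1/(8C_0)$. Theorem \ref{ThmSinThetaExt} then yields $\|\hat P_\II - P_\II\|_2^2 \leq 64\, x^2 \sum_{i \leq k, j > k}\lambda_i\lambda_j/(\lambda_i-\lambda_j)^2$, which is \eqref{EqCorCovOpEq} with $C_1 = 64 C_0^2$.

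The substantive work is to show that, on an event of probability $\geq 1 - k_0^2(n^{1-q/4}/t^{q/2} + \exp(-t^2))$, the relative coefficient bounds \eqref{EqRelCoeffExt} hold with $x = C_0 t/\sqrt{n}$. Since $\langle u_i, E u_j\rangle = \sqrt{\lambda_i\lambda_j}\bigl(\tfrac{1}{n}\sum_l \eta_i^{(l)}\eta_j^{(l)} - \delta_{ij}\bigr)$ and $\lambda_i$ is constant on each $\II_r$, for $r,s \leq m+n$ the bound \eqref{EqRelCoeffExt} is equivalent to
\begin{equation*}
\sum_{i \in \II_r}\sum_{j \in \II_s}\Big(\tfrac{1}{n}\sum_l \eta_i^{(l)}\eta_j^{(l)} - \delta_{ij}\Big)^2 \leq x^2 |\II_r||\II_s|.
\end{equation*}
Under \eqref{EqMomentAss}, Cauchy--Schwarz gives $\mathbb{E}|\eta_i \eta_j|^{q/2} \leq C_\eta$, and the classical Fuk--Nagaev inequality applied to the centred i.i.d.\ sum $\tfrac{1}{n}\sum_l (\eta_i^{(l)}\eta_j^{(l)} - \delta_{ij})$ yields, for each fixed pair $(i,j)$,
\begin{equation*}
\mathbb{P}\Big(\big|\tfrac{1}{n}\sum_l \eta_i^{(l)}\eta_j^{(l)} - \delta_{ij}\big| > C_0 t/\sqrt{n}\Big) \leq C\bigl(n^{1-q/4}/t^{q/2} + \exp(-c t^2)\bigr).
\end{equation*}
A union bound over the at most $(k_0+1)^2$ coordinate pairs $(i,j) \in \II' \times \II'$ simultaneously controls all block pairs $(\II_r,\II_s)$ with $r,s \leq m+n$ and produces the $k_0^2$ factor in the probability bound.

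The remaining block pairs involve the infinite-dimensional tail $\II_{m+n+1} = (\II')^c$, for which a coordinate-wise approach cannot give the rate $t/\sqrt{n}$: this is the main obstacle. Here I invoke a Fuk--Nagaev-type inequality for the Hilbert--Schmidt norm of operator-valued i.i.d.\ sums (the version already employed in \eqref{exp:LinTerm} via Lemma \ref{prop:fuknagaev:proj}), applied to $P_{(\II')^c} E P_{(\II')^c} = \tfrac{1}{n}\sum_l Y_l \otimes Y_l - \mathbb{E}[Y \otimes Y]$ with $Y = P_{(\II')^c} X$. The relevant variance proxy is $\mathbb{E}\|Y\|^4 \leq C_\eta (\sum_{j > k_0}\lambda_j)^2$ by Cauchy--Schwarz together with \eqref{EqMomentAss}, which yields $\|P_{(\II')^c} E P_{(\II')^c}\|_2 \leq C_0 t/\sqrt{n} \cdot \sum_{j > k_0}\lambda_j$ on an event of the same form. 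An analogous, and simpler, argument treats the mixed blocks $P_{\II_r} E P_{(\II')^c}$ for $r \leq m+n$. Collecting the probability estimates over all $O(k_0^2)$ block pairs and rescaling $t$ to absorb constants then gives \eqref{EqCorCovOpEq}.
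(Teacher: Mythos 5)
Your proposal is correct and follows essentially the same route as the paper: apply Theorem \ref{ThmSinThetaExt} with $\II=\{1,\dots,k\}$, $\II'=\{1,\dots,k_0\}$ and $x=C_0t/\sqrt{n}$, identify the relative-gap hypothesis with \eqref{EqCCond} via \eqref{EqAssReconstr}, verify \eqref{EqRelCoeffExt} by Fuk--Nagaev concentration, and union-bound over the $O(k_0^2)$ block pairs. The only (harmless) deviation is that for the constant-eigenvalue blocks inside $\II'$ you use a scalar Fuk--Nagaev bound coordinatewise, whereas the paper applies the Hilbert-space Fuk--Nagaev inequality of Lemma \ref{prop:fuknagaev:proj} uniformly to every block pair, including the tail block you handle separately.
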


%\begin{remark}
%{\MW In \eqref{EqCorCovOpEq}, $k_0$ can be replaced by the number of distinct eigenvalues with indices smaller than or equal to $k_0$.}
%\end{remark}

\begin{remark}
Theorem \ref{ThmCovOp} gives useful bounds for $t \geq \sqrt{\log k_0}$. Corresponding bounds for $t <  \sqrt{\log k_0}$ can be obtained using Remark \ref{rem:first:order}, we omit the details. In \eqref{EqCorCovOpEq}, $k_0$ can be replaced by the number of distinct eigenvalues with indices smaller than or equal to $k_0$.
\end{remark}

\begin{remark}
In the literature it is often assumed that the $\eta_j$ are independent and satisfy some moment growth condition, see e.g. \cite{mas_complex_2014}, or that $X$ is sub-Gaussian or even Gaussian, see e.g. \cite{KL14,KL16b}. In contrast, we only need the existence of a uniform moment bound on the $\eta_j$ of order $q > 4$. In fact, since our bounds are based on the Fuk-Nagaev inequality, we expect our moment assumptions to be minimal. Despite this generality, we obtain sharp results, capable of serving as a new tool in functional PCA or kernel PCA (cf. \cite{HE15,MR2920735,SS2001}).
\end{remark}

For $t=\sqrt{\log n}$, Theorem \ref{ThmCovOp} gives the same high probability bound as in \eqref{exp:LinTerm}. Following \cite{JW18}, we call \eqref{EqCCond} a relative rank condition, in contrast to the effective rank condition introduced in \cite{KL16b}, where the latter is based on \eqref{Eqholfunccon} and the concentration inequality in \cite{KL14,adamczak2015}. The relative rank condition can be easily verified for exponentially or polynomially decaying eigenvalues. For instance, for polynomially decaying eigenvalues, the eigenvalue expressions in \eqref{EqCCond} and \eqref{EqCorCovOpEq} are of order $k\log(k)$ and $k^2\log(k)$, respectively (cf. \cite[Lemma 7.13]{M16}). By a monotonicity argument, the same is true if the decay only holds approximately:

\begin{corollary}\label{CorEDPD}
Grant Assumption \eqref{EqMomentAss}. If for some $\alpha>0$, $\lambda_j\geq j^{-\alpha-1}$ for $j\leq d$ and $\lambda_j\leq j^{-\alpha-1}$ for $j\geq  d$, then there are constants $c_1,C_1 > 0$ depending only on $\alpha$, $C_{\eta}$, and $q$ such that for all $k\geq 2$ and all $t\geq 1$ satisfying $t k\log (k)\leq c_1\sqrt{n}$, we have
\begin{equation*}
 \mathbb{P}\Big(\|\hat P_{\{1,\dots,k\}}-P_{\{1,\dots,k\}}\|_2^2 > \frac{C_1t^2k^2\log (k)}{n}\Big)\leq  k^2\big(n^{1-q/4}t^{-q/2} +   \exp(-t^2)\big).
\end{equation*}
Moreover, if $\lambda_j\geq \exp(-\alpha j)$ for $j\leq k$ and $\lambda_j\leq \exp(-\alpha j)$ for $j\geq k$, then for all $k\geq 1$ and all $t\geq 1$ satisfying $tk\leq c_1\sqrt{n}$, we have
\begin{equation*}
 \mathbb{P}\Big(\|\hat P_{\{1,\dots,k\}}-P_{\{1,\dots,k\}}\|_2^2 > \frac{C_1t^2}{n}\Big)\leq  k^2\big(n^{1-q/4}t^{-q/2} +   \exp(-t^2)\big).
\end{equation*}
\end{corollary}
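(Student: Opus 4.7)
The plan is to apply Theorem \ref{ThmCovOp} with $k_0$ of order $k$, and to verify the size of the two eigenvalue sums that appear in the hypothesis \eqref{EqCCond} and in the variance factor of \eqref{EqCorCovOpEq}. For polynomial decay, take $k_0 = \lceil 2^{1/(\alpha+1)}k \rceil$ (enlarged to exceed $d$ if necessary); for exponential decay, take $k_0 = k + \lceil (\log 2)/\alpha \rceil$. In either case $k_0 = O(k)$ with implied constants depending only on $\alpha$, so the $k_0^2$ factor in the probability bound becomes $O(k^2)$ after absorbing constants.

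The main calculation is to estimate the two sums under exact decay. For $\bar\lambda_j = j^{-\alpha-1}$, writing $\bar\lambda_i/(\bar\lambda_i - \bar\lambda_{k+1}) = 1/(1-(i/(k+1))^{\alpha+1})$ and using $1-x^{\alpha+1} \geq c_\alpha(1-x)$ on $[0,1]$ reduces the first piece of \eqref{EqCCond} to a harmonic sum of size $O_\alpha(k \log k)$; the second piece is symmetric. The variance sum $\sum_{i\leq k}\sum_{j>k}\bar\lambda_i\bar\lambda_j/(\bar\lambda_i - \bar\lambda_j)^2$ has summand $r/(1-r)^2$ with $r = \bar\lambda_j/\bar\lambda_i \in (0,1)$; splitting according to $\ell = j - i$ and bounding $r/(1-r)^2 \lesssim i^2/\ell^2$ for $\ell \leq i$ and by $r$ alone for $\ell > i$ (summed against the $\min(\ell,k)$ admissible pairs) gives $O_\alpha(k^2 \log k)$. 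For $\bar\lambda_j = e^{-\alpha j}$, both sums reduce to geometric series: the relative-rank quantity is $O_\alpha(k)$, and the variance sum equals $\sum_{\ell \geq 1} \min(k,\ell)/(4\sinh^2(\alpha\ell/2)) = O_\alpha(1)$.

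The passage from exact to approximate decay relies on the monotonicities of $\lambda \mapsto \lambda/(\lambda - c)$ (decreasing in $\lambda$, increasing in $c$) and the fact that $\lambda\mu/(\lambda - \mu)^2$ is monotone in the ratio $\mu/\lambda \in (0,1)$. Combined with the one-sided assumptions $\lambda_i \geq \bar\lambda_i$ for $i \leq d$ and $\lambda_j \leq \bar\lambda_j$ for $j \geq d$, these imply that the summands with indices straddling $d$ are pointwise dominated by the exact-decay summands. The main obstacle will be handling the boundary pairs where both indices lie on the same side of $d$; here one uses the same monotone comparison together with $\lambda_k \gtrsim \lambda_{k_0}$ and enlarges $k_0$ by an $O(1)$ additive term (or by $d$ in the polynomial case) to absorb the residual contributions into the constants. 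Once the two sums are bounded as above, \eqref{EqCCond} is implied by $tk\log k \leq c_1\sqrt{n}$ (resp.\ $tk \leq c_1\sqrt{n}$) for small enough $c_1$, and the two conclusions follow by direct substitution into \eqref{EqCorCovOpEq}.
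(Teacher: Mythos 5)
Your proposal is correct and follows the paper's (essentially unwritten) route exactly: apply Theorem \ref{ThmCovOp} with $k_0=O_\alpha(k)$ (namely $k_0=\lceil 2^{1/(\alpha+1)}k\rceil$, resp.\ $k_0=k+\lceil(\log 2)/\alpha\rceil$), evaluate the two eigenvalue sums for the exact decay profiles, and transfer to approximate decay by the monotonicity of $1/(1-c/\lambda)$ and of $r/(1-r)^2$ in $r=\mu/\lambda$; your estimates $k\log k$ and $k^2\log k$ (resp.\ $O_\alpha(k)$ and $O_\alpha(1)$) are the right ones and your derivations of them check out. One clarification that dissolves your anticipated ``main obstacle'': the threshold $d$ in the polynomial case must be read as $d=k$ (exactly as written in the exponential case), so every pair of indices appearing in \eqref{EqCCond} and in the variance sum straddles $d$, each summand is dominated pointwise by its exact-decay counterpart, and no same-side boundary pairs arise. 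For $d$ genuinely unrelated to $k$ the statement itself fails (e.g.\ $\lambda_1=\dots=\lambda_{d-1}$ is admissible and annihilates the gaps $\lambda_i-\lambda_{k+1}$ for $k<d-1$), so the residual-absorption device you sketch is neither needed nor available.
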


Relative perturbation bounds for the empirical covariance operator have recently attracted attention in the literature. In \cite{mas_complex_2014,M16,JW18}, using different arguments, the special case $\II = \{i\}$ was treated. The general case is more complicated. For instance, \cite{mas_complex_2014} combines the holomorphic functional calculus with a nice normalization argument to go beyond the standard approach outlined in the introduction. They were, however, not able to obtain the sharp leading term in \eqref{EqCorCovOpEq} by their method of proof, and require much stronger probabilistic conditions. 
For further, related results see also \cite{RW17}, where instead of the subspace distance the excess risk is treated. Several types of relative perturbations have also been investigated in the deterministic case, see e.g. ~\cite{I00}. However, designed for a different purpose, they give significantly inferior results when applied to the empirical covariance operator.

Let us now turn to the proof of Theorem \ref{ThmCovOp}. The following lemma provides the necessary concentration inequality needed to deal with Condition \eqref{EqRelCoeffExt}.

\begin{lemma}\label{prop:fuknagaev:proj}
In the above setting, suppose that \eqref{EqMomentAss} holds. Let $\mathcal{I}, \mathcal{J} \subseteq \mathbb{N}$. Then there is a constant $C_1 > 0$ depending only on $C_\eta$ and $q$, such that for $t \geq 1$,
\begin{align*}
\P\Big(\frac{\|P_{\mathcal{I}} E P_{\mathcal{J}}\|_2}{(\sum_{i \in \mathcal{I}}\sum_{j \in \mathcal{J}}  \lambda_i \lambda_j)^{1/2}} \geq \frac{C_1t}{\sqrt{n}} \Big) \leq \frac{n}{(\sqrt{n}t)^{q/2}} + \exp(-t^2) .
\end{align*}
\end{lemma}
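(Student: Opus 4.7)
The plan is to view $P_\mathcal{I}EP_\mathcal{J}$ as a normalised sum of i.i.d.\ centred Hilbert--Schmidt operators and apply a Fuk--Nagaev-type concentration inequality in the Hilbert space of Hilbert--Schmidt operators (e.g.\ in the form of Einmahl--Li, or as obtained by the classical truncation scheme combined with Bernstein plus $L^{q/2}$-Markov). Concretely, I would set $Y_l = P_\mathcal{I}(X_l\otimes X_l)P_\mathcal{J}$ and $Z_l = Y_l - \mathbb{E}Y_l$, so that $n\, P_\mathcal{I}EP_\mathcal{J} = \sum_{l=1}^n Z_l$ is a sum of i.i.d.\ centred elements of the Hilbert space $(\mathrm{HS}(\mathcal{H}),\|\cdot\|_2)$. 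The two moment quantities that drive the Fuk--Nagaev bound are the weak variance $\sigma^2 = \mathbb{E}\|Z\|_2^2$ and the tail moment $\mathbb{E}\|Z\|_2^{q/2}$, both of which I would control against $N := (\sum_{i\in\mathcal{I}}\lambda_i)(\sum_{j\in\mathcal{J}}\lambda_j)$.

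For the variance, I would write $\|Y\|_2^2 = \|P_\mathcal{I}X\|^2\|P_\mathcal{J}X\|^2 = \sum_{i\in\mathcal{I},j\in\mathcal{J}}\lambda_i\lambda_j\,\eta_i^2\eta_j^2$, so that
\[
\mathbb{E}\|Z\|_2^2 \le \mathbb{E}\|Y\|_2^2 = \sum_{i\in\mathcal{I},j\in\mathcal{J}}\lambda_i\lambda_j\,\mathbb{E}[\eta_i^2\eta_j^2] \le C_\eta^{4/q}\, N,
\]
since $q>4$ makes $\mathbb{E}[\eta_i^2\eta_j^2]\le (\mathbb{E}\eta_i^4)^{1/2}(\mathbb{E}\eta_j^4)^{1/2}$ uniformly bounded. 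For the tail moment, I would use $\mathbb{E}\|Z\|_2^{q/2}\le 2^{q/2}(\mathbb{E}\|Y\|_2^{q/2}+\|\mathbb{E}Y\|_2^{q/2})$, handle the first summand by Cauchy--Schwarz in probability and Minkowski in $L^{q/2}$ (valid since $q/2>2$), giving
\[
\big(\mathbb{E}\|P_\mathcal{I}X\|^q\big)^{2/q} = \Big\|\sum_{i\in\mathcal{I}}\lambda_i\eta_i^2\Big\|_{L^{q/2}} \le \sum_{i\in\mathcal{I}}\lambda_i\,(\mathbb{E}|\eta_i|^q)^{2/q}\le C_\eta^{2/q}\sum_{i\in\mathcal{I}}\lambda_i,
\]
hence $\mathbb{E}\|Y\|_2^{q/2}\le C\,N^{q/4}$. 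The bias term is smaller: $\|\mathbb{E}Y\|_2 = \|P_\mathcal{I}\Sigma P_\mathcal{J}\|_2 = (\sum_{i\in\mathcal{I}\cap\mathcal{J}}\lambda_i^2)^{1/2}\le \min(\sum_\mathcal{I}\lambda_i,\sum_\mathcal{J}\lambda_j)\le \sqrt{N}$. Combining yields $\mathbb{E}\|Z\|_2^{q/2}\lesssim N^{q/4}$.

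The final step is to plug these into the Hilbert-space Fuk--Nagaev inequality
\[
\mathbb{P}\Big(\Big\|\sum_{l=1}^n Z_l\Big\|_2 \ge u\Big) \le c_1\frac{n\,\mathbb{E}\|Z\|_2^{q/2}}{u^{q/2}} + \exp\!\Big(-\frac{c_2\,u^2}{n\sigma^2}\Big),
\]
and to choose $u = C\,t\sqrt{nN}$. Using $\sigma^2\le CN$ and $\mathbb{E}\|Z\|_2^{q/2}\le CN^{q/4}$, the first term becomes $n\,N^{q/4}/(t\sqrt{nN})^{q/2} = n^{1-q/4}t^{-q/2} = n/(\sqrt n\,t)^{q/2}$, and the Gaussian term is bounded by $\exp(-c\,t^2)$. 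Dividing by $n\sqrt{N}$ recovers the claimed inequality after adjusting $C_1$ so that the exponent $c\,t^2$ becomes $t^2$.

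The main obstacle is invoking the correct version of Fuk--Nagaev in the Hilbert space of Hilbert--Schmidt operators with sharp tracking of the constants, so that the two terms in the bound separate exactly into $n(\sqrt n\,t)^{-q/2}$ and $e^{-t^2}$; this is where the choice $u=Ct\sqrt{nN}$ and the matching of $\sigma^2$ and $\mathbb{E}\|Z\|_2^{q/2}$ with powers of $N$ must line up precisely. The underlying moment calculations are standard, and the rest of the argument is book-keeping.
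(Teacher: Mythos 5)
Your proposal is correct and follows essentially the same route as the paper: the same decomposition $nP_{\mathcal{I}}EP_{\mathcal{J}}=\sum_{l}Z_l$ into i.i.d.\ centred Hilbert--Schmidt operators, the same moment bounds (norm of the sum, weak variance, and $q/2$-th moment, each controlled by powers of $N=(\sum_{\mathcal{I}}\lambda_i)(\sum_{\mathcal{J}}\lambda_j)$), and the same appeal to a Fuk--Nagaev inequality in the Hilbert space of Hilbert--Schmidt operators (the paper uses Einmahl--Li, Theorem 3.1). The only caveat is that this inequality bounds deviations above $(1+\delta)\mathbb{E}\|\sum_l Z_l\|_2$ rather than above $0$, but since $\mathbb{E}\|\sum_l Z_l\|_2\le\sqrt{n\,\mathbb{E}\|Z\|_2^2}\le C\sqrt{nN}$, your choice $u=Ct\sqrt{nN}$ with $t\ge 1$ absorbs that term.
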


Theorem \ref{ThmCovOp} is now an immediate consequence of Theorem \ref{ThmSinThetaExt} applied with $x=C_1t/\sqrt{n}$, the union bound, and Lemma \ref{prop:fuknagaev:proj}. Lemma \ref{prop:fuknagaev:proj} itself follows from \cite[Theorem 3.1]{MR2434306}, a Fuk-Nagaev type inequality in Banach space. For the sake of completeness, we describe the relevant computations below.

\begin{proof}[Proof of Lemma \ref{prop:fuknagaev:proj}]
Observe that
\begin{align*}
n P_{\mathcal{I}} E P_{\mathcal{J}} &= \sum_{l = 1}^n \sum_{i \in \mathcal{I}} \sum_{j \in \mathcal{J}} (\langle X_l, u_i \rangle \langle X_l, u_j \rangle - \delta_{ij} \sqrt{\lambda_i \lambda_j} ) u_i \otimes u_j=: \sum_{l = 1}^n Z_l.
\end{align*}
By Jensen's inequality and \eqref{EqMomentAss}, we have
\begin{align*}
 \Big(\mathbb{E}\big\|\sum_{l = 1}^n Z_l \big\|_2\Big)^2 \leq C_2 n  \sum_{i \in \mathcal{I}}\sum_{j \in \mathcal{J}} \lambda_i  \lambda_j.
\end{align*}
Similarly, we obtain for $l =1,\dots,n$,
\begin{align*}
\mathbb{E} \|Z_l\|_2^{q/2} \leq C_3 \Big(\sum_{i \in \mathcal{I}}\sum_{j \in \mathcal{J}} \lambda_i  \lambda_j \Big)^{q/4}.
\end{align*}
Finally, for any Hilbert-Schmidt operator $f$ on $\mathcal{H}$ with $\|f\|_2 \leq 1$, we have
\begin{align*}
\mathbb{E} \sum_{l = 1}^n \langle f, Z_l \rangle_2^2\leq n\mathbb{E}\sum_{i \in \mathcal{I}}\sum_{j \in \mathcal{J}} (\langle X, u_i \rangle \langle X, u_j \rangle - \delta_{ij}\sqrt{\lambda_i \lambda_j})^2 \leq C_2 n \sum_{i \in \mathcal{I}}\sum_{j \in \mathcal{J}}  \lambda_i \lambda_j,
\end{align*}
as can be seen by the Cauchy-Schwarz inequality and \eqref{EqMomentAss}. Here $\langle\cdot,\cdot\rangle_2$ denotes the Hilbert-Schmidt scalar product. Lemma \ref{prop:fuknagaev:proj} now follows from \cite[Theorem 3.1]{MR2434306} applied to the Hilbert space of all Hilbert-Schmidt operators on $\mathcal{H}$.
\end{proof}

\subsection{Using the operator norm}\label{SecOperatornormEx}
Let us give two simple examples showing how one can benefit from having two different norms in Condition \eqref{EqBoundCoeffBlock}.

\subsubsection*{Random perturbation of a low-rank matrix}
Let $\HH=\mathbb{R}^p$ and let $\Sigma=\sum_{i\leq k}\lambda_iu_iu_i^T$ be a symmetric matrix with $\lambda_1\geq \dots\geq \lambda_k>0$ and $u_1,\dots,u_k$ orthonormal system in $\mathbb{R}^p$. Moreover, let $\hat\Sigma=\Sigma+\epsilon\xi$, where $\epsilon>0$ and $\xi=(\xi_{ij})_{1\leq i,j\leq p}$ is a GOE matrix, i.e. a symmetric random matrix whose upper triangular entries are independent zero mean Gaussian random variables with $\mathbb{E}\epsilon_{ij}^2=1$ for $1\leq i<j\leq p$ and $\mathbb{E}\epsilon_{ii}^2=2$ for $i=1,\dots,p$. Then Theorem \ref{ThmSinThetaAbstr} yields that for all $t\geq 1$, with probability at least $1-18\exp(-c_1t)$,
\begin{equation}\label{EqGOEPert}
\|\hat P_{1}-P_{1}\|_2^2\leq C_1\bigg(\frac{\epsilon^2tk}{(\lambda_1-\lambda_2)^2}+\frac{\epsilon^2t(p-k)}{\lambda_1^2}+\frac{\epsilon^4t^2(p-k)^2}{\lambda_1^2(\lambda_1-\lambda_2)^2}\bigg),
\end{equation}
where $c_1,C_1>0$ are absolute constants. In comparison, the Davis-Kahan $\sin\Theta$ theorem in \eqref{dk} with $\|E\|_2$ replaced by $\sqrt{|I|}\|E\|_\infty$ yields a bound of order $\epsilon^2p/(\lambda_1-\lambda_2)^2$, which is inferior to \eqref{EqGOEPert} for $k$ smaller than $p$ and $\lambda_1-\lambda_2$ smaller than $\lambda_1$. The bound in \eqref{EqGOEPert} can be compared to \cite[Theorem 8]{VVU2011} and \cite[Remark 15]{OROURKE201826}, where a structurally similar third term appears. Note that this term can be avoided under an additional relative gap condition, by applying \eqref{EqSinThetaGeneralCons} instead of \eqref{EqSinThetaGeneral}.

Let us deduce \eqref{EqGOEPert} from \eqref{EqSinThetaGeneralCons}, using also Remark \ref{RemGenHM}. Since $\xi$ is invariant under orthogonal transformations, we may assume that $u_i$ is the $i$-th standard basis vector in $\mathbb{R}^{p}$. Hence, Condition \eqref{EqBoundCoeffBlock} is dealing with sub-blocks of $E=\epsilon\xi$. We choose $\II_1=\{1\}$, $\II_2=\{2,\dots,k\}$, and $\II_3=\{k+1,\dots,p\}$.
Applying concentration results for the operator norm of random matrices (e.g. \cite[Theorem 5.6]{MR3185193} and \cite[Theorem 1]{latala2005}) and the bound $\|P_{\II_r}EP_{\II_s}\|_2\leq \sqrt{|\II_r|\wedge |\II_s|}\|P_{\II_r}EP_{\II_s}\|_\infty$, we get that for all $t\geq 1$, \eqref{EqBoundCoeffBlock} is satisfied with probability at least $1-18\exp(-c_1t)$, provided that we choose
\[
(a_1,a_2,a_3)=(\epsilon \sqrt{t},\epsilon \sqrt{t},\epsilon \sqrt{t}),\quad (b_1,b_2,b_3)=(C_2\epsilon \sqrt{t},C_2\epsilon\sqrt{t} (k-1),C_2\epsilon\sqrt{t}(p-k)).
\]
By Theorem \ref{ThmSinThetaAbstr}, we get, for all $t\geq 1$, with probability at least $1-18\exp(-c_1t)$,
\[
\|\hat P_1-P_1\|_2^2\leq C_3\bigg(\frac{\epsilon^2tk}{(\lambda_1-\lambda_2)^2}+\frac{\epsilon^2 t(p-k)}{\lambda_1^2}+\frac{\epsilon^4t^2k^2}{(\lambda_1-\lambda_2)^4}+\frac{\epsilon^4t^2(p-k)^2}{\lambda_1^2(\lambda_1-\lambda_2)^2}\bigg),
\]
provided that
\begin{equation}\label{EqRademacher3}
\frac{\epsilon^2tk}{(\lambda_1-\lambda_2)^2}+\frac{\epsilon^2t(p-k)}{\lambda_1(\lambda_1-\lambda_2)}\leq c_2.
\end{equation}
Since always $\|\hat P_1-P_1\|_2^2\leq 2$, Condition \eqref{EqRademacher3} can be dropped and the above bound can be rearranged into the desired form \eqref{EqGOEPert}, by adjusting $C_3$.

\subsubsection*{Spiked covariance model}
Consider the empirical covariance operator from Section \ref{SecECO}. Let $\HH=\mathbb{R}^p$ and let $\Sigma=\mu_1P_{\II_1}+\mu_2P_{\II_2}+\mu_3P_{\II_3}$ with $\mu_1>\mu_2>\mu_3>0$ and $m_r=|\II_r|$, $r=1,2,3$. Assume that the $\eta_j$ are independent and sub-Gaussian, meaning that for some constant $C_\eta>0$, $\mathbb{E}^{1/q}|\eta_j|^q\leq C_\eta\sqrt{q}$ for all natural numbers $q\geq 1$ and all $j=1,\dots,p$. Then Theorem \ref{ThmSinThetaAbstr} yields that for all $t\geq 1$, with probability at least $1-9\exp(-(m_1\wedge m_2\wedge m_3)  t)$,
\begin{align}
&\|\hat P_{\II_1}-P_{\II_1}\|_2^2\leq Cm_1\bigg(\frac{\mu_1^2k}{(\mu_1-\mu_2)^2}\frac{t}{n}+\frac{\mu_1^2(p-k)}{(\mu_1-\mu_3)^2}\frac{t}{n}+\frac{\mu_1^4(p-k)^2}{(\mu_1-\mu_2)^2(\mu_1-\mu_3)^2}\frac{t^2}{n^2}\bigg).\label{EqSCM}
\end{align}
where $k=m_1+m_2$ is the dimension of the spiked part and $C>0$ is a constant depending only on $C_\eta$. In comparison, the Davis-Kahan $\sin\Theta$ theorem in \eqref{dk} with $\|E\|_2$ replaced by $\sqrt{|I|}\|E\|_\infty$ yields a bound of order $\mu_1^2m_1p/(n(\mu_1-\mu_2)^2)$.%\sout{, which is inferior for $k$ smaller than $p$ and $\mu_1-\mu_2$ smaller than $\mu_1-\mu_3$}.

Let us deduce \eqref{EqSCM} from Theorem \ref{ThmSinThetaAbstr}, by replacing Lemma \ref{prop:fuknagaev:proj} with the following concentration inequality for the operator norm of empirical covariance operators.
\begin{lemma} In the above setting, there is a constant $C_1>0$ depending only on $C_\eta$ such that for all $r,s=1,2,3$ and all $t\geq 1$ satisfying $t(m_r\vee m_s)/n\le 1$, we have
\[
\mathbb{P}\bigg(\|P_{\II_r}EP_{\II_s}\|_\infty>C_1\sqrt{\frac{\mu_r\mu_s(m_r\vee m_s)t}{n}}\bigg)\leq \exp(-(m_r\vee m_s) t).
\]
\end{lemma}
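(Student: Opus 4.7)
The plan is to diagonalize the problem in the eigenbasis of $\Sigma$, which reduces the claim to a concentration inequality for a sample (cross-)covariance matrix with independent sub-Gaussian entries. One writes $X_l=\sum_{j\ge 1}\sqrt{\lambda_j}\eta_{l,j}u_j$, where on each spike block $\II_r$ one has $\lambda_j=\mu_r$ for $j\in\II_r$. By the independence and sub-Gaussian assumption on $(\eta_j)$, the vector $\eta_l^{(r)}=(\eta_{l,j})_{j\in\II_r}\in\mathbb{R}^{m_r}$ has i.i.d.\ sub-Gaussian coordinates (across $l$, and independent across $r$ for different blocks) with mean $0$ and variance $1$. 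Identifying $P_{\II_r}\HH$ with $\mathbb{R}^{m_r}$ via the basis $(u_j)_{j\in\II_r}$, one obtains
\begin{equation*}
P_{\II_r}EP_{\II_s}=\sqrt{\mu_r\mu_s}\,\Bigl(\frac{1}{n}\sum_{l=1}^n \eta_l^{(r)}(\eta_l^{(s)})^T-\delta_{rs}I_{m_r}\Bigr),
\end{equation*}
so it suffices to control the operator norm of the bracketed random matrix.

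For the diagonal case $r=s$, I would directly invoke a standard concentration inequality for sample covariance matrices of vectors with independent sub-Gaussian entries (Vershynin-type, or Adamczak-Litvak-Pajor-Tomczak): with probability at least $1-2e^{-u}$,
\begin{equation*}
\Bigl\|\frac{1}{n}\sum_{l=1}^n \eta_l^{(r)}(\eta_l^{(r)})^T-I_{m_r}\Bigr\|_\infty\leq C\Bigl(\sqrt{\frac{m_r+u}{n}}+\frac{m_r+u}{n}\Bigr),
\end{equation*}
with $C=C(C_\eta)$. Setting $u=m_r t$ and using $m_r t/n\le 1$ collapses both terms to $C'\sqrt{m_r t/n}$, giving the bound after multiplying by $\mu_r$.

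For the off-diagonal case $r\neq s$, I would use a net argument. By variational duality,
\begin{equation*}
\Bigl\|\frac{1}{n}\sum_{l=1}^n \eta_l^{(r)}(\eta_l^{(s)})^T\Bigr\|_\infty=\sup_{u\in S^{m_r-1},\,v\in S^{m_s-1}}\frac{1}{n}\sum_{l=1}^n \langle u,\eta_l^{(r)}\rangle\langle v,\eta_l^{(s)}\rangle.
\end{equation*}
For fixed $u,v$, and since $\eta^{(r)}$ and $\eta^{(s)}$ are independent (the entries of $\eta$ are independent across $j$), each summand is a product of two independent centered sub-Gaussian variables, hence centered sub-exponential with $\|\cdot\|_{\psi_1}\leq C(C_\eta)$. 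Bernstein's inequality yields
\begin{equation*}
\mathbb{P}\Bigl(\Bigl|\frac{1}{n}\sum_{l=1}^n \langle u,\eta_l^{(r)}\rangle\langle v,\eta_l^{(s)}\rangle\Bigr|>s\Bigr)\leq 2\exp\bigl(-cn\min(s^2,s)\bigr).
\end{equation*}
A standard $1/4$-net argument on $S^{m_r-1}\times S^{m_s-1}$ (nets of cardinality $\le 9^{m_r}$ and $9^{m_s}$) inflates the bound by a factor $9^{m_r+m_s}$; choosing $s=C'\sqrt{(m_r\vee m_s)t/n}$ and using $(m_r\vee m_s)t/n\le 1$ then yields the supremum bound with probability at least $1-\exp(-(m_r\vee m_s)t)$. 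Scaling by $\sqrt{\mu_r\mu_s}$ delivers the claim.

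The main technical point is the off-diagonal case: the matrix $n^{-1}\sum_l \eta_l^{(r)}(\eta_l^{(s)})^T$ has mean zero even without centering the rank-one terms individually (thanks to $r\ne s$ and independence), which is what makes the sub-exponential Bernstein bound sharp at scale $\sqrt{t/n}$; the net argument then contributes exactly the $\sqrt{m_r\vee m_s}$ dimension factor. The diagonal case is a textbook application, and the only real bookkeeping is tracking the sub-Gaussian constant throughout so that $C_1$ depends only on $C_\eta$.
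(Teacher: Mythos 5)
Your proposal is correct and follows essentially the same route as the paper, which simply cites \cite[Theorem 1]{KL14} for the diagonal case and "a similar standard net argument" for the off-diagonal case; your reduction to $\sqrt{\mu_r\mu_s}\,(n^{-1}\sum_l\eta_l^{(r)}(\eta_l^{(s)})^T-\delta_{rs}I)$ and the Bernstein-plus-net bound for $r\neq s$ is exactly that argument made explicit. The only remaining bookkeeping is choosing the deviation level (e.g.\ $u=2m_rt$ in the diagonal case) so that the prefactor $2$ in the tail probability is absorbed, which is routine.
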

The case $r=s$ follows from \cite[Theorem 1]{KL14}, the non-diagonal case follows from a similar standard net argument as presented therein (see also \cite{V12,OROURKE201826}). Proceeding as in the proof of \eqref{EqGOEPert}, we get that for all $t\geq 1$, \eqref{EqBoundCoeffBlock} is satisfied with probability at least $1-9\exp(-(m_1\wedge m_2\wedge m_3)  t)$, provided that we choose
\begin{align*}
(a_1,a_2,a_3)&=(\mu_1\sqrt{t/n},\mu_2\sqrt{t/n}, \mu_3\sqrt{t/n}),\\
(b_1,b_2,b_3)&=(C_1\mu_1m_1\sqrt{t/n},C_1\mu_2m_2\sqrt{t/n} ),C_1\mu_3m_3\sqrt{t/n}).
\end{align*}
Applying Theorem \ref{ThmSinThetaAbstr}, \eqref{EqSCM} follows from a similar computation leading to \eqref{EqGOEPert}.

\section{Proof of main theorem}\label{SecProof}
\subsection{Separation of eigenvalues}
In this section, we show that under Condition \eqref{EqCCondBlock}, the empirical eigenvalues $(\hat\lambda_i)_{i\in \II}$ are well-separated from $(\lambda_j)_{j\notin \II}$.

\begin{lemma}\label{LemEVConcBlock}
Under the assumptions of Theorem \ref{ThmSinThetaAbstr}, we have
\[
|\hat\lambda_i-\lambda_j|\geq \frac{|\lambda_i-\lambda_j|}{2}
\]
for all $i\in\II$ and $j\notin\II$.
\end{lemma}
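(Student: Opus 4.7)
The statement is equivalent, by the triangle inequality, to the eigenvalue localization
\[
|\hat\lambda_i - \lambda_i| \leq g_{\II,i}/2 \qquad \text{for each } i \in \II, \qquad g_{\II,i} := \min_{j \notin \II}|\lambda_i - \lambda_j|,
\]
because then $|\hat\lambda_i - \lambda_j| \geq |\lambda_i-\lambda_j| - g_{\II,i}/2 \geq |\lambda_i-\lambda_j|/2$ for every $j \notin \II$. Note that $g_{\II,i} \geq g_r$ whenever $i \in \II_r$, $r \leq m$, so it suffices to show that each $\hat\lambda_i$ lies in a region around $\lambda_i$ of radius $g_{r(i)}/2$.

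My plan is to establish this localization via a holomorphic functional calculus / homotopy argument. Introduce the family $\Sigma_t = \Sigma + tE$, $t \in [0,1]$, and a closed contour $\Gamma \subset \C$ enclosing exactly $\{\lambda_i : i \in \II\}$ and staying at distance at least $g_r/2$ from every $\lambda_k$ with $k \in \II_r$. A concrete choice is the boundary of the union $\bigcup_{r \leq m}\bigcup_{i \in \II_r}\{z : |z-\lambda_i| \leq g_r/2\}$: since $g_r \leq |\lambda_i-\lambda_j|$ for $i \in \II_r$, $j \notin \II$, each $\lambda_j$, $j\notin\II$, stays at distance at least $g_r/2$ outside. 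Consider the Riesz projection
\[
P_\II(t) = -\frac{1}{2\pi i}\oint_\Gamma (zI - \Sigma_t)^{-1}\,dz.
\]
Provided $(zI-\Sigma_t)^{-1}$ exists for every $z \in \Gamma$ and every $t \in [0,1]$, the map $t \mapsto \operatorname{rank} P_\II(t)$ is continuous and integer-valued, hence constant, equal to $|\II|$. In particular $\hat\Sigma$ has exactly $|\II|$ eigenvalues inside $\Gamma$, which are necessarily $\{\hat\lambda_i : i \in \II\}$, and each lies at distance at least $g_{r(i)}/2$ from every $\lambda_j$ with $j\notin\II$, as required.

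The main obstacle is the uniform resolvent estimate
\[
\sup_{z \in \Gamma,\, t \in [0,1]} \|(zI-\Sigma)^{-1}(tE)\|_\infty < 1,
\]
which allows the Neumann series $(zI-\Sigma_t)^{-1} = (zI-\Sigma)^{-1}\sum_{k \geq 0} (tE(zI-\Sigma)^{-1})^k$ to converge. By construction of $\Gamma$ one has $\|(zI-\Sigma)^{-1}P_{\II_r}\|_\infty \leq 2/g_r$ on $\Gamma$. Decomposing $(zI-\Sigma)^{-1}E = \sum_{r,s}(zI-\Sigma)^{-1}P_{\II_r} E P_{\II_s}$ and applying a weighted Schur test, the two norm bounds in \eqref{EqBoundCoeffBlock} enter asymmetrically: the operator-norm bound $\max(\sqrt{a_r b_s},\sqrt{b_r a_s})$ governs one side of the Schur test while the Hilbert--Schmidt bound $\sqrt{b_rb_s}$ governs the other. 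With weights proportional to $\sqrt{a_r/g_r}$ and $\sqrt{b_r/g_r}$, I expect this to yield
\[
\|(zI-\Sigma)^{-1}E\|_\infty \leq c\Big(\textstyle\sum_r a_r/g_r\Big)^{1/2}\Big(\sum_r b_r/g_r\Big)^{1/2}
\]
for a numerical constant $c$; the factor $1/64$ in \eqref{EqCCondBlock} is chosen precisely so the right-hand side is strictly less than $1$.

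The hard part is getting the correct pairing of $a_r$ and $b_r$ in this Schur-type estimate, since bounding everything by the larger of the two would fail to produce summable quantities and would not exploit the product structure of \eqref{EqCCondBlock}. This asymmetric pairing is also the reason the theorem statement admits two different sequences $(a_r)$ and $(b_r)$ rather than a single one, and it is the technical heart of the lemma.
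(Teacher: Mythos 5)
Your strategy (Riesz projections for $\Sigma_t=\Sigma+tE$ plus a homotopy/rank argument) is genuinely different from the paper's, which deliberately bypasses the holomorphic functional calculus: the paper invokes a min--max characterisation of eigenvalues (Proposition \ref{PropEvRC}, from \cite{RW17}) that converts an operator-norm bound on a \emph{symmetrically} weighted perturbation, $\sum_{k,l}(\lambda_k+y-\lambda_i)^{-1/2}(\lambda_l+y-\lambda_i)^{-1/2}P_kEP_l$, directly into the index-matched one-sided bounds $\hat\lambda_i-\lambda_i\ge -y$. That symmetry is not cosmetic, and this is where your proposal has a genuine gap: the estimate you rely on, $\|(zI-\Sigma)^{-1}E\|_\infty\le c\,(\sum_r a_r/g_r)^{1/2}(\sum_r b_r/g_r)^{1/2}$, is \emph{false} under \eqref{EqBoundCoeffBlock}--\eqref{EqCCondBlock}: in the block $(zI-\Sigma)^{-1}P_{\II_r}EP_{\II_s}$ the entire resolvent factor $1/g_r$ sits on the row index, so no weighted Schur test can reproduce the product structure of \eqref{EqCCondBlock}. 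Concretely, take three blocks of dimension two with eigenvalues $1$ (in $\II$), $\delta$ small (in $\II$) and $1-h$ (outside $\II$), so that $g_1=g_3=h$ and $g_2\approx 1$; choose $a_1=b_1=a_2=0$, $b_2=1/(128h)$, $a_3=b_3=h^2$, and $E=\tfrac12(u\otimes v+v\otimes u)$ with $u=h\,u_3$, $v=\sqrt{b_2}\,v_2+h\,v_3$, $u_3\perp v_3$. Then \eqref{EqBoundCoeffBlock} holds, $(\sum_r a_r/g_r)(\sum_r b_r/g_r)\approx 1/128$, yet at the contour point $z=1+h/2$ one has $\|(zI-\Sigma)^{-1}E\|_\infty\ge\tfrac12\|v\|\,\|(zI-\Sigma)^{-1}u\|\ge \tfrac13(128h)^{-1/2}\to\infty$ as $h\to 0$. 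The resolvent of $\Sigma_t$ on $\Gamma$ does still exist, because invertibility of $I-t(zI-\Sigma)^{-1}E$ is governed by the \emph{spectral radius}, which equals that of the symmetrised operator $(zI-\Sigma)^{-1/2}E(zI-\Sigma)^{-1/2}$; for the latter the block bound $\|\sum_{r,s}T_rET_s\|_\infty^2\le\sum_{r,s}\|T_rET_s\|_\infty^2\le 2(\sum_r a_r\|T_r\|_\infty^2)(\sum_s b_s\|T_s\|_\infty^2)$ does exploit the product condition. So your route is repairable, but only after symmetrising the resolvent---at which point you have essentially reconstructed the estimate that the paper's proof of Lemma \ref{LemEVConcBlock} carries out on the real axis.

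A second, smaller gap: knowing that $\hat\Sigma$ has exactly $|\II|$ eigenvalues inside $\Gamma$ does not identify them as $\{\hat\lambda_i:i\in\II\}$ in the rank ordering, especially when $\II$ is not an interval of indices; you would need an additional counting argument (e.g.\ comparing, for each real point separating the disks, the number of eigenvalues of $\Sigma_t$ above it). Proposition \ref{PropEvRC} delivers the correctly indexed one-sided bounds directly and avoids this bookkeeping.
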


The proof is based on the following result, which is an intermediate step in the proof of Theorems 3.7 and 3.10 in \cite{RW17}. In fact, \eqref{EqEvRD}, for instance, follows from the min-max characterisation of eigenvalues in combination with \cite[Lemma 3.8]{RW17}.
\begin{proposition}\label{PropEvRC}
For all $i\geq 1$ and $y>0$, we have the implications
\begin{equation}\label{EqEvRD}
\Big\|\sum_{k\geq i}\sum_{l\geq i}\frac{1}{\lambda_i+y-\lambda_k}\frac{1}{\lambda_i+y-\lambda_l}P_kEP_l\Big\|_\infty^2\leq 1\Rightarrow
\hat \lambda_i-\lambda_i\leq y
\end{equation}
and
\begin{equation}\label{EqEvLDOp}
\Big\|\sum_{k\leq i}\sum_{l\leq i}\frac{1}{\sqrt{\lambda_k+y-\lambda_i}}\frac{1}{\sqrt{\lambda_l+y-\lambda_i}}P_kEP_l\Big\|_\infty^2\leq 1\Rightarrow
 \hat\lambda_i-\lambda_i\geq -y.
\end{equation}
\end{proposition}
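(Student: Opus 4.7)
The plan is the Courant--Fischer min-max characterisation of eigenvalues, testing against the spectral invariant subspaces of $\Sigma$, and translating each hypothesised operator-norm bound into a linear operator inequality on that subspace by a similarity transform.

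For \eqref{EqEvRD}, set $z=\lambda_i+y$ and test with $V_{\ge i}:=\operatorname{span}\{u_k:k\ge i\}$, which has codimension $i-1$, so that $\hat\lambda_i\le\max_{v\in V_{\ge i},\|v\|=1}\langle\hat\Sigma v,v\rangle$ by min-max. It thus suffices to establish the operator inequality $P_{\ge i}EP_{\ge i}\le G$ on $V_{\ge i}$, where $G:=\sum_{k\ge i}(\lambda_i+y-\lambda_k)P_k$ is strictly positive on $V_{\ge i}$, since then $\langle\hat\Sigma v,v\rangle=\langle\Sigma v,v\rangle+\langle Ev,v\rangle\le z\|v\|^2$ for every $v\in V_{\ge i}$. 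The operator $A$ in the hypothesis is a similarity transform of $P_{\ge i}EP_{\ge i}$ by a positive operator diagonal in the eigenbasis of $\Sigma$; after identifying the weights with $G^{-1/2}$, the hypothesis $\|A\|_\infty\le 1$ becomes $-I\le G^{-1/2}EG^{-1/2}\le I$, and sandwiching by $G^{1/2}$ yields the desired $\pm P_{\ge i}EP_{\ge i}\le G$ on $V_{\ge i}$.

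The dual argument gives \eqref{EqEvLDOp}: test with $V_{\le i}:=\operatorname{span}\{u_k:k\le i\}$ (dimension $i$) and $\tilde G:=\sum_{k\le i}(\lambda_k+y-\lambda_i)P_k$, strictly positive on $V_{\le i}$, so $\hat\lambda_i\ge \min_{v\in V_{\le i},\|v\|=1}\langle\hat\Sigma v,v\rangle$ by min-max. The hypothesis has the literal square-root form $\|\tilde G^{-1/2}E\tilde G^{-1/2}\|_\infty\le 1$ on $V_{\le i}$, which gives $-\tilde G\le P_{\le i}EP_{\le i}\le\tilde G$ after the same conjugation; the lower half yields $\hat\Sigma\ge\Sigma-\tilde G=(\lambda_i-y)P_{\le i}$ on $V_{\le i}$, whence $\hat\lambda_i\ge\lambda_i-y$.

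The main obstacle is the bookkeeping in the similarity transform: the weights in the hypothesis must be identified with exactly the inverse square roots of $G$ (respectively $\tilde G$) on the invariant subspace, so that conjugation produces the linear inequality $\pm E\le G$ (resp.\ $\pm E\le\tilde G$) rather than a quadratic variant. This matching is essentially the content of \cite[Lemma 3.8]{RW17}, and once the two operator inequalities are in place on the respective invariant subspaces, the min-max conclusions are immediate.
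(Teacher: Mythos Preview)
Your approach---min-max plus the conjugation that the paper attributes to \cite[Lemma 3.8]{RW17}---is exactly the paper's. For \eqref{EqEvLDOp} the match is perfect: the printed weights $1/\sqrt{\lambda_k+y-\lambda_i}$ are literally the diagonal entries of $\tilde G^{-1/2}$, so $\|\tilde G^{-1/2}P_{\le i}EP_{\le i}\tilde G^{-1/2}\|_\infty\le1$ gives $-\tilde G\le P_{\le i}EP_{\le i}$ on $V_{\le i}$ and the min-max lower bound follows.

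For \eqref{EqEvRD} there is a wrinkle you gloss over. The printed weights are $1/(\lambda_i+y-\lambda_k)$, \emph{without} square roots, so the operator in the hypothesis is $G^{-1}P_{\ge i}EP_{\ge i}G^{-1}$, not $G^{-1/2}EG^{-1/2}$ as you write. Conjugating $\|G^{-1}EG^{-1}\|_\infty\le 1$ by $G$ yields only $P_{\ge i}EP_{\ge i}\le G^2$, which does not imply $P_{\ge i}EP_{\ge i}\le G$ once some eigenvalue $\lambda_i+y-\lambda_k$ exceeds $1$. Concretely, with $p=2$, $\Sigma=\operatorname{diag}(10,1)$, $E=\operatorname{diag}(4,0)$, $i=1$, $y=2$, one has $\|G^{-1}EG^{-1}\|_\infty=1$ yet $\hat\lambda_1=14>12=\lambda_1+y$, so \eqref{EqEvRD} is false as printed. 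The intended statement almost certainly carries square-root weights, symmetric to \eqref{EqEvLDOp} and consistent with how the paper subsequently argues Lemma~\ref{LemEVConcBlock} (where the $T_r$ involve $1/\sqrt{\lambda_k+y-\lambda_i}$). Under that reading your argument is correct and identical in spirit to the paper's; you should flag the typo explicitly rather than silently ``identifying'' the weights with $G^{-1/2}$.
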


\begin{proof}[Proof of Lemma \ref{LemEVConcBlock}]
It suffices to show that
\begin{equation}\label{EqLD}
\hat\lambda_i-\lambda_j\geq \frac{\lambda_i-\lambda_j}{2}
\end{equation}
for all $i\in\II$ and $j\notin\II$ such that $i< j$, and that
\begin{equation}\label{EqRD}
\lambda_j-\hat\lambda_i\geq \frac{\lambda_j-\lambda_i}{2}
\end{equation}
for all $i\in\II$ and $j\notin\II$ such that $j< i$. We only prove \eqref{EqLD}, since the proof of \eqref{EqRD} follows the same line of arguments. First, \eqref{EqLD} is equivalent to
\begin{equation}
\hat\lambda_i-\lambda_i\geq -\frac{\lambda_i-\lambda_j}{2}
\end{equation}
for all $i\in\II$ and $j\notin\II$ such that $i< j$. Thus, it suffices to show that the left-hand side in \eqref{EqEvLDOp} is satisfied with $y=(\lambda_i-\lambda_j)/2$. For $r\geq 1$, set
\[T_r=\sum_{k\in\II_r,k\leq i}\frac{1}{\sqrt{\lambda_k+y-\lambda_i}}P_k.\]
(Set $T_r=0$ if the summation is empty.) Using that the $T_r$ are self-adjoint and have orthogonal ranges, we have
\begin{align}
&\Big\|\sum_{k\leq i}\sum_{l\leq i}\frac{1}{\sqrt{\lambda_k+y-\lambda_i}}\frac{1}{\sqrt{\lambda_l+y-\lambda_i}}P_kEP_l\Big\|_\infty^2\label{EqOpNormOrth}\\
&=\Big\|\sum_{r\geq 1}\sum_{s\geq 1}T_rET_s\Big\|_\infty^2\leq \sum_{r\geq 1}\sum_{s\geq 1}\|T_rET_s\|_\infty^2.\nonumber
\end{align}
Using the identities $T_r=T_rP_{\II_r}=P_{\II_r}T_r$, the fact that the operator norm is sub-multiplicative, and \eqref{EqBoundCoeffBlock}, we have
\begin{equation*}
\|T_rET_s\|_\infty^2 \leq (a_rb_s+b_ra_s)\|T_r\|_\infty^2\|T_s\|_\infty^2
\end{equation*}
for all $r,s\geq 1$. Hence,
\begin{equation}\label{EqOPCondEv}
\sum_{r\geq 1}\sum_{s\geq 1}\|T_rET_s\|_\infty^2\leq2 \bigg( \sum_{r\geq 1}a_r\|T_r\|_\infty^2\bigg)\bigg( \sum_{s\geq 1}b_s\|T_s\|_\infty^2\bigg).
\end{equation}
Now, using that $\min_{k\in\II_r,k\leq i}(\lambda_k+y-\lambda_i)\geq \min_{k\in\II_r}|\lambda_k-\lambda_i|\geq g_r$ for $r> \nb$, and $\min_{k\in\II_r,k\leq i}(\lambda_k+y-\lambda_i)\geq \min_{k\in\II_r}|\lambda_k-\lambda_j|/2\geq g_r/2$ for $r\leq \nb$, we obtain that
\begin{align}\label{EqSGTPo1}
 \|T_r\|_\infty^2\leq 2/g_r
\end{align}
for all $r\geq 1$. Using \eqref{EqOpNormOrth}-\eqref{EqSGTPo1} in combination with \eqref{EqCCondBlock}, we conclude that
\begin{align*}
&\Big\|\sum_{k\leq i}\sum_{l\leq i}\frac{1}{\sqrt{\lambda_k+y-\lambda_i}}\frac{1}{\sqrt{\lambda_l+y-\lambda_i}}P_kEP_l\Big\|_\infty^2\leq 1/8\leq 1
\end{align*}
and the claim follows from \eqref{EqEvLDOp}.
\end{proof}

\subsection{Key contraction phenomenon}

In this section, we present our main lemma:
\begin{lemma}\label{KeyLemmaBlock}
Under the assumptions of Theorem \ref{ThmSinThetaAbstr}, the inequality
\begin{align*}
\bigg\|\sum_{i\in\II}\frac{1}{\hat\lambda_i-\lambda_{j}}P_{\II_r}E\hat P_i\bigg\|_2\leq \bigg(\frac{3}{2}\sqrt{b_r}+4\sqrt{a_r}\bigg(\sum_{s\geq 1}\frac{b_s}{g_s}\bigg)\bigg)\sqrt{\sum_{s\leq \nb}\frac{b_s}{\min_{i\in\II_s}(\lambda_{i}- \lambda_j)^2}}
\end{align*}
holds for all $r\geq 1$ and all $j\notin \II$.
\end{lemma}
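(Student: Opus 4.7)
Since the rank-one projections $\hat P_i$ are pairwise orthogonal in the Hilbert--Schmidt inner product, one first obtains the identity
\[
\bigg\|\sum_{i\in\II}\frac{P_{\II_r}E\hat P_i}{\hat\lambda_i-\lambda_j}\bigg\|_2^2=\sum_{i\in\II}\frac{\|P_{\II_r}E\hat u_i\|^2}{(\hat\lambda_i-\lambda_j)^2}.
\]
The key step is then to split $\hat u_i=P_{\II}\hat u_i+P_{\II^c}\hat u_i$ and bound the two contributions $\|P_{\II_r}EP_{\II}\hat u_i\|$ and $\|P_{\II_r}EP_{\II^c}\hat u_i\|$ separately; the first will yield the leading $\frac{3}{2}\sqrt{b_r}\sqrt{F_j}$ term (where $F_j=\sum_{s\le m}b_s/\min_{i\in\II_s}(\lambda_i-\lambda_j)^2$) and the second the $4\sqrt{a_r}\bigl(\sum_s b_s/g_s\bigr)\sqrt{F_j}$ remainder through a contraction argument.

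For the $P_{\II^c}$-contribution, the eigenvalue equation $\hat\Sigma\hat u_i=\hat\lambda_i\hat u_i$ together with $P_kE\hat u_i=(\hat\lambda_i-\lambda_k)P_k\hat u_i$ gives
\[
P_{\II^c}\hat u_i=\sum_{s>m}G_{s,i}\,P_{\II_s}E\hat u_i,\qquad G_{s,i}=\sum_{k\in\II_s}\frac{P_k}{\hat\lambda_i-\lambda_k},
\]
and by Lemma~\ref{LemEVConcBlock} each $\|G_{s,i}\|_\infty\le 2/g_s$. Inserting this, using $\|P_{\II_r}EP_{\II_s}\|_\infty^2\le a_rb_s+b_ra_s$, and applying Cauchy--Schwarz with weights $1/g_s$ produces the key pointwise bound
\[
\|P_{\II_r}EP_{\II^c}\hat u_i\|^2\le 4(a_rK+b_rA)\sum_{s\ge 1}\frac{\|P_{\II_s}E\hat u_i\|^2}{g_s},\qquad A=\sum_s a_s/g_s,\;K=\sum_s b_s/g_s.
\]
Summing over $i\in\II$ with weights $1/(\hat\lambda_i-\lambda_j)^2$ and writing $T_s(j):=\|\sum_{i\in\II}P_{\II_s}E\hat P_i/(\hat\lambda_i-\lambda_j)\|_2^2$, this becomes the \emph{contraction inequality}
\[
\text{(contribution of $P_{\II^c}$ to $T_r(j)$)}\le 4(a_rK+b_rA)\sum_{s\ge 1}\frac{T_s(j)}{g_s}.
\]

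For the $P_{\II}$-contribution, the crucial observation is that the operators $R_s:=\sum_{i\in\II_s}\hat P_i/(\hat\lambda_i-\lambda_j)$ for $s\le m$ have pairwise vanishing products (the $\hat P_i$ are mutually orthogonal), hence the corresponding piece of the sum splits orthogonally as $\sum_{s\le m}\|P_{\II_r}EP_{\II}R_s\|_2^2$. Each factor is estimated using $\|R_s\|_\infty\le 2/\min_{i\in\II_s}|\lambda_i-\lambda_j|$ from Lemma~\ref{LemEVConcBlock} and the Hilbert--Schmidt bound $\|P_{\II_r}EP_{\II_s}\|_2^2\le b_rb_s$, with cross-terms absorbed into the contraction. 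This gives a first-order bound essentially of the form $\lesssim b_rF_j$, which is the source of $\frac{3}{2}\sqrt{b_r}\sqrt{F_j}$.

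To close the argument, one multiplies the combined bound on $T_r(j)$ by $1/g_r$ and sums over $r$; the contraction term contributes $16AK\sum_sT_s(j)/g_s$, which is absorbed by the left-hand side since $AK\le 1/64$ by \eqref{EqCCondBlock}. This yields $\sum_r T_r(j)/g_r\lesssim KF_j$, and plugging this back into the pointwise bound for a single $T_r(j)$ produces the stated inequality after taking square roots. The main obstacle is obtaining the sharp constant $\tfrac{3}{2}$ in the first-order term: because the partition $\{\II_s\}_{s\le m}$ is arbitrary (not tied to eigenvalue multiplicities), the empirical projections $\hat P_{\II_s}$ need not be close to $P_{\II_s}$, so the bound $\|P_{\II_r}EP_{\II}\hat P_{\II_s}\|_2\lesssim\sqrt{b_rb_s}$ must be produced by carefully tracking the coupling to the contraction rather than by a direct perturbative comparison.
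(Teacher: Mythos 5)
Your overall architecture (split $\hat u_i=P_{\II}\hat u_i+P_{\II^c}\hat u_i$, resolve the $P_{\II^c}$ part through the eigenvalue equation and Lemma \ref{LemEVConcBlock}, set up a self-referential inequality in the quantities $T_s(j)$, and close it using $AK\le 1/64$) matches the paper's proof, and your treatment of the $P_{\II^c}$-contribution is essentially identical to \eqref{EqKeyLemma4}--\eqref{EqKeyLemma5}. But the $P_{\II}$-contribution has a genuine gap, and it is exactly the one you flag at the end without resolving. After the orthogonal splitting over $s\le\nb$, each block is $P_{\II_r}EP_{\II}\hat P_{\II_s}R_s$, so your estimate requires a bound of the form $\|P_{\II_r}EP_{\II}\hat P_{\II_s}\|_2\lesssim\sqrt{b_rb_s}$. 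The hypothesis \eqref{EqBoundCoeffBlock} only controls $\|P_{\II_r}EP_{\II_s}\|_2$ with the \emph{population} projection on the right; replacing $P_{\II_s}$ by $\hat P_{\II_s}$ is precisely the perturbation problem the lemma is trying to solve, and there is no a priori reason for $\hat P_{\II_s}$ to be close to $P_{\II_s}$ (the blocks $\II_s$ are an arbitrary partition of $\II$, with possibly tiny or vanishing internal gaps). Saying the bound "must be produced by carefully tracking the coupling to the contraction" names the obstacle but does not supply the mechanism. Moreover, even granting such a bound, your route picks up the factor $\|R_s\|_\infty\le 2/\min_{i\in\II_s}|\lambda_i-\lambda_j|$ from Lemma \ref{LemEVConcBlock}, which yields a leading constant of at least $2\sqrt{b_r}$ rather than $\tfrac32\sqrt{b_r}$.

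The paper's resolution is a different algebraic move: the swapping identity. Using $(\hat\lambda_i-\lambda_k)P_k\hat P_i=P_kE\hat P_i$ together with
\[
\frac{1}{\hat\lambda_i-\lambda_j}-\frac{1}{\lambda_k-\lambda_j}=\frac{\lambda_k-\hat\lambda_i}{(\hat\lambda_i-\lambda_j)(\lambda_k-\lambda_j)},
\]
one rewrites $\sum_{i\in\II}\frac{1}{\hat\lambda_i-\lambda_j}P_{\II_r}EP_{\II}\hat P_i$ as $\sum_{k\in\II}\frac{1}{\lambda_k-\lambda_j}P_{\II_r}EP_k\hat P_{\II}$ plus a quadratic-in-$E$ remainder $\sum_{k,i}\frac{1}{(\lambda_k-\lambda_j)(\hat\lambda_i-\lambda_j)}P_{\II_r}EP_kE\hat P_i$. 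In the main term the index-dependent weights now sit on the population projections $P_k$, the empirical projection appears only as the global contraction $\hat P_{\II}$ (so it can be dropped at no cost), and the term is bounded by $\sqrt{b_r}B$ with the population gaps $\lambda_k-\lambda_j$ — this is where the sharp first-order constant comes from. The remainder is of the same type as your $P_{\II^c}$ term and feeds the contraction. If you want to salvage your plan, replace your $P_{\II}$-step by this swap; as written, the claimed bound on $\|P_{\II_r}EP_{\II}\hat P_{\II_s}\|_2$ is unsupported and the proof does not close.
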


\begin{remark}
If the partitions consist of singletons and $a_j=b_j=x\lambda_j$ (as in Theorem \ref{ThmSinTheta}), then Lemma \ref{KeyLemmaBlock} implies
\begin{align*}
\bigg\|\sum_{i\in\II}\frac{1}{\hat\lambda_i-\lambda_{j}}P_{k}E\hat P_i\bigg\|_2\leq 2x\sqrt{\sum_{i\in\II}\frac{\lambda_i\lambda_k}{(\lambda_{i}- \lambda_j)^2}}
\end{align*}
for all $k\ge 1$ and all $j\notin \II$. Ignoring the constant $2$, the right-hand side supplies an upper bound for the Hilbert-Schmidt norm of $\sum_{i\in\II}P_{k}E P_i/(\lambda_i-\lambda_{j})$ using \eqref{EqCoeffRelBound}.
\end{remark}

\begin{proof}
We recall some simple properties of the Hilbert-Schmidt norm which we will use in the proof without further comment. For Hilbert-Schmidt operators $A$ and $B$ on $\HH$, we have $\|AB\|_2\leq \|A\|_{\infty}\|B\|_2$. Moreover, for a Hilbert-Schmidt operator $A$ on $\HH$ and a bounded sequence of real numbers $(x_i)_{i\geq 1}$ we have $\|\sum_{i\geq 1}x_iP_iA\|_2^2=\sum_{i\geq 1}x_i^2\|P_iA\|_2^2$ and the same identity holds for $P_iA$ replaced by $AP_i$.

Let $r\geq 1$ be arbitrary. By the identity $I=P_{\II}+P_{\II^c}$ (see the convention in Section \ref{SecNotation}), and the triangular inequality, we have
\begin{align}
\bigg\|\sum_{i\in \II}\frac{1}{\hat\lambda_i-\lambda_{j}}P_{\II_r}E\hat P_i\bigg\|_2\label{EqTriangDec}
&\leq  \bigg\|\sum_{i\in \II}\frac{1}{\hat\lambda_i-\lambda_{j}}P_{\II_r}EP_{\II}\hat P_i\bigg\|_2\\
&+\bigg\|\sum_{i\in \II}\frac{1}{\hat\lambda_i-\lambda_{j}}P_{\II_r}EP_{\II^c}\hat P_i\bigg\|_2
\nonumber.
\end{align}
Note that all denominators are non-zero by Lemma~\ref{LemEVConcBlock}. We start with the first term on the right-hand side of \eqref{EqTriangDec}. By the identities
\begin{align}
(\hat \lambda_i-\lambda_{k})P_{k}\hat P_i&=P_{k}E \hat  P_i,\label{EqPertBB}\\
\frac{1}{\hat\lambda_i-\lambda_j}-\frac{1}{\lambda_{k}-\lambda_j}&=\frac{\lambda_{k}-\hat\lambda_i}{(\hat\lambda_i-\lambda_j)(\lambda_{k}-\lambda_j)}\nonumber,
\end{align}
where the latter holds provided that $i\in\II$ and $\lambda_{j}\neq \lambda_k$ (note that the denominators are non-zero by Lemma \ref{LemEVConcBlock}), we get
\begin{align*}
\sum_{i\in \II}\frac{1}{\hat\lambda_i-\lambda_j}P_{\II}\hat P_i-\sum_{k\in \II}\frac{1}{\lambda_{k}- \lambda_j}P_{k}\hat P_{\II}
&=  \sum_{k\in \II}\sum_{i\in \II}\bigg(\frac{1}{\hat\lambda_i-\lambda_j}-\frac{1}{\lambda_{k}- \lambda_j}\bigg)P_{k}\hat P_{i}\\
&=-\sum_{k\in \II}\sum_{i\in\II}\frac{1}{\lambda_{k}-\lambda_j}\frac{1}{\hat\lambda_i-\lambda_j}P_{k} E \hat P_{i}.
\end{align*}
Using this identity and the triangular inequality, we get
\begin{align}
\bigg\|\sum_{i\in \II}\frac{1}{\hat\lambda_i-\lambda_{j}}P_{\II_r}E P_{\II}\hat P_i\bigg\|_2\label{EqSwappTrick}
 &\leq \bigg\|\sum_{k\in \II}\frac{1}{\lambda_{k}- \lambda_{j}}P_{\II_r}EP_{k}\hat P_{\II}\bigg\|_2\\
 &+\bigg\|\sum_{k\in \II}\sum_{i\in\II}\frac{1}{\lambda_{k}-\lambda_{j}}\frac{1}{\hat\lambda_i-\lambda_{j}}P_{\II_r}EP_{k} E \hat P_{i}\bigg\|_2\nonumber.
\end{align}
The first term on the right-hand side of \eqref{EqSwappTrick} is bounded as follows:
\begin{align}
\bigg\|\sum_{k\in \II}\frac{1}{\lambda_{k}- \lambda_j}P_{\II_r}EP_{k}\hat P_{\II}\bigg\|_2
&\leq \bigg\|\sum_{k\in \II}\frac{1}{\lambda_{k}- \lambda_{j}}P_{\II_r}EP_{k}\bigg\|_2\label{EqKeyLemma1}.
\end{align}
Next, consider the second term on the right-hand side of \eqref{EqSwappTrick}. Using the triangular inequality, we have
\begin{align}
&\bigg\|\sum_{k\in \II}\sum_{i\in\II}\frac{1}{\lambda_{k}-\lambda_{j}}\frac{1}{\hat\lambda_i-\lambda_{j}}P_{\II_r}EP_{k} E \hat P_{i}\bigg\|_2\label{EqKeyLemma2}\\
&\leq\sum_{s\leq \nb}\bigg\|\sum_{k\in \II_s}\sum_{i\in\II}\frac{1}{\lambda_{k}-\lambda_{j}}\frac{1}{\hat\lambda_i-\lambda_{j}}P_{\II_r}EP_{k} E \hat P_{i}\bigg\|_2\nonumber\\
&=\sum_{s\leq \nb}\sqrt{\sum_{i\in\II} \frac{1}{(\hat\lambda_i-\lambda_{j})^2}\bigg\|\sum_{k\in \II_s}\frac{1}{\lambda_{k}-\lambda_{j}}P_{\II_r}EP_{k} E \hat P_{i}\bigg\|_2^2}.\nonumber
\end{align}
Now, for each $s\leq \nb$ and $i\in \II$,
\begin{align*}
&\sum_{k\in \II_s}\frac{1}{\lambda_{k}-\lambda_{j}}P_{\II_r}EP_{k} E \hat P_{i}=P_{\II_r}EP_{\II_s}\bigg(\sum_{k\in \II_s}\frac{1}{\lambda_{k}-\lambda_{j}}P_k\bigg)P_{\II_s}E\hat P_i
\end{align*}
and by \eqref{EqBoundCoeffBlock} and the definition of $g_s$, this implies that
\begin{align*}
\bigg\|\sum_{k\in \II_s}\frac{1}{\lambda_{k}-\lambda_{j}}P_{\II_r}EP_{k} E \hat P_{i}\bigg\|_2
&\leq \bigg\|P_{\II_r}EP_{\II_s}\bigg\|_\infty\bigg\|\sum_{k\in \II_s}\frac{1}{\lambda_{k}-\lambda_{j}}P_k\bigg\|_\infty\bigg\|P_{\II_s}E\hat P_i\bigg\|_2\\
&\leq \bigg(\frac{\sqrt{a_rb_s}}{g_s}+\frac{\sqrt{b_ra_s}}{g_s}\bigg)\bigg\|P_{\II_s}E\hat P_i\bigg\|_2.
\end{align*}
Inserting this inequality into \eqref{EqKeyLemma2}, we get
\begin{align}
&\bigg\|\sum_{k\in \II}\sum_{i\in\II}\frac{1}{\lambda_{k}-\lambda_{j}}\frac{1}{\hat\lambda_i-\lambda_{j}}P_{\II_r}EP_{k} E \hat P_{i}\bigg\|_2\label{EqKeyLemma3}\\
&\leq\sum_{s\leq \nb}\bigg(\frac{\sqrt{a_rb_s}}{g_s}+\frac{\sqrt{b_ra_s}}{g_s}\bigg)\bigg\|\sum_{i\in\II}\frac{1}{\hat\lambda_i-\lambda_{j}}P_{\II_s}E\hat P_i\bigg\|_2.\nonumber
\end{align}
For the second term on the right-hand side of \eqref{EqTriangDec} we proceed similarly. By \eqref{EqPertBB} and the triangular inequality, we have
\begin{align}
\bigg\|\sum_{i\in \II}\frac{1}{\hat\lambda_i-\lambda_{j}}P_{\II_r}E P_{\II^c}\hat P_i\bigg\|_2
&= \bigg\|\sum_{k\notin\II}\sum_{i\in \II}\frac{1}{\hat\lambda_{i}-\lambda_{k}}\frac{1}{\hat\lambda_i-\lambda_{j}}P_{\II_r}E P_{k}E\hat P_i\bigg\|_2\nonumber\\
&\leq\sum_{s>\nb}\sqrt{\sum_{i\in \II}\frac{1}{(\hat\lambda_i-\lambda_{j})^2}\bigg\|\sum_{k\in\II_s}\frac{1}{\hat\lambda_{i}-\lambda_{k}}P_{\II_r}E P_{k}E\hat P_i\bigg\|_2^2}.\label{EqKeyLemma4}
\end{align}
Now, for $s>\nb$ and $i\in\II$, we have
\begin{align*}
\sum_{k\in\II_s}\frac{1}{\hat\lambda_{i}-\lambda_{k}}P_{\II_r}E P_{k}E\hat P_i=P_{\II_r}EP_{\II_s}\bigg(\sum_{k\in\II_s}\frac{1}{\hat\lambda_{i}-\lambda_{k}}P_k\bigg)P_{\II_s}E\hat P_i
\end{align*}
and by \eqref{EqBoundCoeffBlock} and Lemma \ref{LemEVConcBlock}, this implies that
\begin{align*}
\bigg\|\sum_{k\in\II_s}\frac{1}{\hat\lambda_{i}-\lambda_{k}}P_{\II_r}E P_{k}E\hat P_i\bigg\|_2\leq2\bigg(\frac{\sqrt{a_rb_s}}{g_s}+\frac{\sqrt{b_ra_s}}{g_s}\bigg)\bigg\|P_{\II_s}E\hat P_i\bigg\|_2.
\end{align*}
Inserting this into \eqref{EqKeyLemma4}, we conclude that
\begin{align}
&\bigg\|\sum_{i\in \II}\frac{1}{\hat\lambda_i-\lambda_{j}}P_{\II_r}E P_{\II^c}\hat P_i\bigg\|_2\label{EqKeyLemma5}\\
&\leq 2\sum_{s>\nb}\bigg(\frac{\sqrt{a_rb_s}}{g_s}+\frac{\sqrt{b_ra_s}}{g_s}\bigg)\bigg\|\sum_{i\in\II}\frac{1}{\hat\lambda_i-\lambda_{j}}P_{\II_s}E\hat P_i\bigg\|_2\nonumber.
\end{align}
Collecting \eqref{EqTriangDec}, \eqref{EqSwappTrick}-\eqref{EqKeyLemma5}, we conclude that
\begin{align}
&\bigg\|\sum_{i\in \II}\frac{1}{\hat\lambda_i-\lambda_j}P_{\II_r}E\hat P_i\bigg\|_2
\leq \bigg\|\sum_{i\in \II}\frac{1}{\lambda_{i}- \lambda_j}P_{\II_r}EP_{i}\bigg\|_2\label{EqKeyLemma6}\\
&+2\sum_{s\geq 1}\bigg(\frac{\sqrt{a_rb_s}}{g_s}+\frac{\sqrt{b_ra_s}}{g_s}\bigg)\bigg\|\sum_{i\in\II}\frac{1}{\hat\lambda_i-\lambda_{j}}P_{\II_s}E\hat P_i\bigg\|_2\nonumber\quad \forall r\geq 1.
\end{align}
It remains to solve this recursive inequality. First, using \eqref{EqBoundCoeffBlock}, we have
\begin{align}
&\bigg\|\sum_{i\in \II}\frac{1}{\lambda_{i}- \lambda_j}P_{\II_r}EP_{i}\bigg\|_2 =\sqrt{\sum_{i\in \II}\frac{\|P_{\II_r}EP_{i}\|_2^2}{(\lambda_{i}- \lambda_j)^2}}\label{EqKeyLemma8}\\
&\leq \sqrt{\sum_{s\leq \nb}\frac{\|P_{\II_r}EP_{\II_s}\|_2^2}{\min_{i\in\II_s}(\lambda_{i}- \lambda_j)^2}}
\leq \sqrt{\sum_{s\leq \nb}\frac{b_rb_s}{\min_{i\in\II_s}(\lambda_{i}- \lambda_j)^2}}=:\sqrt{b_r}B\nonumber
\end{align}
for all $r\geq 1$. If we set
\begin{equation*}
A_r=\bigg\|\sum_{i\in\II}\frac{1}{\hat\lambda_i-\lambda_{j}}P_{\II_r}E\hat P_i\bigg\|_2\qquad\forall r\geq 1,
\end{equation*}
then \eqref{EqKeyLemma6} implies that
\begin{equation}\label{EqRecIneq1}
A_r\leq \sqrt{b_r}B+2\sqrt{a_r}\bigg(\sum_{s\geq 1}\frac{\sqrt{b_s}}{g_s}A_s\bigg)+2\sqrt{b_r}\bigg(\sum_{s\geq 1}\frac{\sqrt{a_s}}{g_s}A_s\bigg)\quad\forall r\geq 1.
\end{equation}
Multiplying both sides with $\sqrt{b_r}/g_r$ and summing over $r\geq 1$, we have
\begin{align*}
&\sum_{r\geq 1}\frac{\sqrt{b_r}}{g_r}A_r\\
&\leq \bigg(\sum_{r\geq 1}\frac{b_r}{g_r}\bigg)B+2\bigg( \sum_{r\geq 1}\frac{\sqrt{a_rb_r}}{g_r}\bigg)\bigg(\sum_{s\geq 1}\frac{\sqrt{b_s}}{g_s}A_s\bigg)+2\bigg( \sum_{r\geq 1}\frac{b_r}{g_r}\bigg)\bigg(\sum_{s\geq 1}\frac{\sqrt{a_s}}{g_s}A_s\bigg).
\end{align*}
By \eqref{EqCCondBlock} and the Cauchy Schwarz inequality, this implies
\begin{align*}
\sum_{r\geq 1}\frac{\sqrt{b_r}}{g_r}A_r\leq \frac{4}{3}\bigg(\sum_{r\geq 1}\frac{b_r}{g_r}\bigg)B+\frac{8}{3}\bigg( \sum_{r\geq 1}\frac{b_r}{g_r}\bigg)\bigg(\sum_{s\geq 1}\frac{\sqrt{a_s}}{g_s}A_s\bigg).
\end{align*}
Inserting this inequality into \eqref{EqRecIneq1}, we get
\begin{align}
A_r&\leq \sqrt{b_r}B+\frac{8}{3}\sqrt{a_r}\bigg(\sum_{s\geq 1}\frac{b_s}{g_s}\bigg)B\label{EqRecIneq3}\\
&+\frac{16}{3}\sqrt{a_r}\bigg( \sum_{s\geq 1}\frac{b_s}{g_s}\bigg)\bigg(\sum_{s\geq 1}\frac{\sqrt{a_s}}{g_s}A_s\bigg)+2\sqrt{b_r}\bigg(\sum_{s\geq 1}\frac{\sqrt{a_s}}{g_s}A_s\bigg)\quad\forall r\geq 1.\nonumber
\end{align}
Now, multiplying both sides with $\sqrt{a_r}/g_r$ and summing over $r\geq 1$, we have
\begin{align*}
&\sum_{r\geq 1}\frac{\sqrt{a_r}}{g_r}A_r \leq \bigg(\sum_{r\geq 1}\frac{\sqrt{a_rb_r}}{g_r}\bigg)B+\frac{8}{3}\bigg(\sum_{r\geq 1}\frac{a_r}{g_r}\bigg)\bigg(\sum_{s\geq 1}\frac{b_s}{g_s}\bigg)B\\
&+\frac{16}{3}\bigg(\sum_{r\geq 1}\frac{a_r}{g_r}\bigg)\bigg( \sum_{s\geq 1}\frac{b_s}{g_s}\bigg)\bigg(\sum_{s\geq 1}\frac{\sqrt{a_s}}{g_s}A_s\bigg)+2\bigg( \sum_{r\geq 1}\frac{\sqrt{a_rb_r}}{g_r}\bigg)\bigg(\sum_{s\geq 1}\frac{\sqrt{a_s}}{g_s}A_s\bigg).
\end{align*}
By \eqref{EqCCondBlock} and the Cauchy Schwarz inequality, this implies
\[
\sum_{r\geq 1}\frac{\sqrt{a_r}}{g_r}A_r\leq \frac{B}{8}+\frac{B}{24}+\frac{1}{12}\bigg(\sum_{s\geq 1}\frac{\sqrt{a_s}}{g_s}A_s\bigg)+\frac{1}{4}\bigg(\sum_{s\geq 1}\frac{\sqrt{a_s}}{g_s}A_s\bigg)
\]
and thus
\[
\sum_{r\geq 1}\frac{\sqrt{a_r}}{g_r}A_r\leq \frac{B}{4}.
\]
Inserting this into \eqref{EqRecIneq3}, we conclude that
\begin{align*}
A_r\leq \frac{3}{2}\sqrt{b_r}B+4\bigg(\sqrt{a_r}\sum_{s\geq 1}\frac{b_s}{g_s}\bigg)B\qquad\forall r\geq 1,
\end{align*}
and the claim follows from inserting the definitions of $A_r$ and $B$.
\end{proof}

\subsection{End of proof of Theorem \ref{ThmSinThetaAbstr}}
We have
\begin{equation}\label{EqHSAdd}
\|\hat{P}_{\mathcal{I}}-P_{\mathcal{I}}\|_2^2=2\langle I-P_{\mathcal{I}},\hat{P}_{\mathcal{I}}\rangle=2\sum_{r>\nb}\langle P_{\II_r},\hat P_{\II}\rangle=2\sum_{r>\nb}\|P_{\II_r}\hat P_{\II}\|_2^2.
\end{equation}
By \eqref{EqPertBB}, we have
\begin{equation}\label{EqExpBoundMin}
\|P_{\II_r}\hat P_{\II}\|_2^2=\sum_{j\in\II_r}\sum_{i\in\II}\frac{\|P_jE\hat P_i\|_2^2}{(\hat\lambda_i-\lambda_j)^2}\leq\sum_{i\in\II}\frac{\|P_{\II_r}E\hat P_i\|_2^2}{\min_{j\in\II_r}(\hat\lambda_i-\lambda_j)^2}.
\end{equation}
Note that all denominators are non-zero by Lemma \ref{LemEVConcBlock}. Now, using \eqref{EqLD}, \eqref{EqRD}, and the fact that $\II_r$ is an interval, we get that $\min_{j\in\II_r}(\hat\lambda_i-\lambda_j)^2$ is attained at at most two points, namely at the endpoints of $\II_r$. Hence, there are $j_0,j_1\in \II_r$ such that
\begin{align*}
\|P_{\II_r}\hat P_{\II}\|_2^2&\leq \bigg\|\sum_{i\in\II}\frac{1}{\hat\lambda_i-\lambda_{j_0}}P_{\II_r}E\hat P_i\bigg\|_2^2+\bigg\|\sum_{i\in\II}\frac{1}{\hat\lambda_i-\lambda_{j_1}}P_{\II_r}E\hat P_i\bigg\|_2^2.
\end{align*}
Inserting this into \eqref{EqHSAdd} and applying Lemma \ref{KeyLemmaBlock}, the claim follows from a simple computation, using the inequality $(y+z)^2\leq 4y^2/3+4z^2$.
\qed

\begin{comment}
\subsubsection*{Acknowledgement} The research of Martin Wahl has been partially funded by Deutsche For\-schungsgemeinschaft (DFG) through grant CRC 1294 "Data Assimilation", Project (A4) "Nonlinear statistical inverse problems with random observations".
\end{comment}

\bibliographystyle{plain}
\bibliography{lit}

\end{document}